\newtheorem{theorem}{Theorem}[section]
\newtheorem{lemma}[theorem]{Lemma}
\newtheorem{proposition}[theorem]{Proposition}
\newtheorem{corollary}[theorem]{Corollary}
\newtheorem{definition}[theorem]{Definition}
\theoremstyle{definition}
\newtheorem{example}[theorem]{Example}
\newtheorem{remark}[theorem]{Remark}
\newcommand{\ol}[1]{\overline{#1}}
\newcommand{\cat}[1]{\mathbf{#1}}
\newcommand{\sSet}[0]{\cat{sSet}}
\newcommand{\dSet}[0]{\cat{dSet}}
\newcommand{\colim} {\operatornamewithlimits{\mathrm{colim}}}
\newcommand{\dau}[0]{\partial}
\newcommand{\mm}[1]{\mathrm{#1}}
\newcommand{\Hom}[0]{\mm{Hom}}
\newcommand{\rt}{\rightarrow}
\numberwithin{equation}{section} 
\begin{document}
\title{Minimal fibrations of dendroidal sets}
\author{Ieke Moerdijk \and Joost Nuiten}
\address{Radboud Universiteit Nijmegen, Institute for Mathematics, Astrophysics and Particle Physics, Heyendaalseweg 135, 6525 AJ Nijmegen, The Netherlands}
\email{i.moerdijk@math.ru.nl \\ j.nuiten@math.ru.nl}

\maketitle

\vspace{-25pt}
\begin{abstract}
We prove the existence of minimal models for fibrations between dendroidal sets in the model structure
for $\infty$-operads, as well as in the covariant model structure for algebras and in the stable one for
connective spectra. In an appendix, we explain how our arguments can be used to extend the results of \cite{cis14}, giving the existence of minimal fibrations in model categories of presheaves over generalised Reedy categories of a rather common type. Besides some applications to the theory of algebras over $\infty$-operads, we also prove a gluing result for parametrized connective spectra (or $\Gamma$-spaces).
\end{abstract}

\section{Introduction}
A classical fact in the homotopy theory of simplicial sets -- tracing back to J.~ C.~ Moore's lecture notes from 1955-56 -- says that any Kan fibration between simplicial sets is homotopy equivalent to a fiber bundle \cite{bar59, gab67, may67}. This is proven by deforming a fibration onto a so-called minimal fibration, a Kan fibration whose only self-homotopy equivalences are isomorphisms. Such minimal fibrations provide very rigid models for maps between simplicial sets -- in particular, they are all fiber bundles -- which are especially suitable for gluing constructions.

Essentially the same method allows one to construct minimal categorical fibrations between $\infty$-categories as well (cf.~ \cite{joy08, lur09}). In fact, these two constructions are particular cases of a general statement on the existence of minimal fibrations in certain model structures on presheaves over Reedy categories, proved by Cisinski in \cite{cis14}. The case of dendroidal sets is not covered by this result however, due to the presence of nontrivial automorphisms in the base category $\Omega$.

The aim of this note is to show that the basic theory of minimal fibrations extends naturally to the setting of dendroidal sets. We say that an operadic fibration $p\colon Y\rt X$ of dendroidal sets (cf.~ \cite{cis09}) is \emph{minimal} if all weak equivalences over $X$
$$\xymatrix@C=1.5pc@R=1.8pc{
Y\ar@{->>}[rd]_p\ar[rr]^-\sim & & Y\ar@{->>}[ld]^p\\
& X &
}$$
are isomorphisms. This terminology is justified by the fact that any trivial cofibration from another fibration into the fibration $p$
$$\xymatrix@C=1.5pc@R=1.8pc{
\tilde{Y}\ar@{->>}[rd]\ar@{>->}[rr]^\sim & & Y\ar@{->>}[ld]^p\\
& X
}$$
is an isomorphism. Indeed, any such trivial cofibration $i$ admits a retraction $r$ with the property that the composite $ir\colon Y\rt Y$ is a self-weak equivalence of $Y$ over $X$ and therefore an isomorphism.

The presence of nontrivial automorphisms in $\Omega$ makes the discussion of minimal fibrations a bit more delicate. For instance, the pullback of a minimal fibration need no longer be minimal again (see Remark \ref{rem:minimalfibrationsarenotpullbackstable} below). Our main result asserts that an operadic fibration can nonetheless be retracted onto a weakly equivalent minimal fibration, although Quillen's argument \cite{qui68} showing that this retraction is a trivial fibration no longer applies in general:
\begin{theorem}\label{thm:existenceofminimalmodels}
Let $p\colon Y\rt X$ be an operadic fibration between dendroidal sets with normal domain. Then the following holds:
\begin{itemize}
 \item[(a)] $p$ admits a minimal fibration $M\rt X$ as a fiberwise strong deformation retract.
 \item[(b)] the retraction $r\colon Y\rt M$ is a trivial fibration of dendroidal sets when the codomain $X$ is normal.
\end{itemize}
\end{theorem}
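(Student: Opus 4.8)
The strategy is to mimic the classical simplicial argument of Moore--Barratt--Gugenheim--May, but carried out cell-by-cell over the generalised Reedy category $\Omega$, taking care of the automorphisms. First I would fix the notion of an $n$-cell $y\colon \Omega[T]\rt Y$ of a dendroidal set being \emph{fiberwise homotopic} to another cell $y'$: they should agree on $\dau\Omega[T]$, have the same image in $X$, and be connected by a homotopy $h\colon \Omega[T]\otimes\Delta[1]\rt Y$ that is constant on $\dau\Omega[T]$ and projects to the degenerate homotopy in $X$. That this is an equivalence relation on nondegenerate cells with fixed boundary, compatible with the $\mathrm{Aut}(T)$-action, is the routine input and should follow from the fact that $p$ is an operadic fibration (so one can lift homotopies and compose them) exactly as in the simplicial case; here one uses that $\Omega[T]\otimes\Delta[1]$ decomposes into shuffles so that the relevant inclusions are trivial cofibrations against $p$.

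Next I would build $M\subseteq Y$ by transfinite induction on the skeletal filtration of the (normal) domain $Y$. Having defined $M_{<n}$, for each isomorphism class of trees $T$ of degree $n$ I pick, $\mathrm{Aut}(T)$-equivariantly, one representative from each fiberwise-homotopy class of nondegenerate $n$-cells of $Y$ whose boundary already lies in $M_{<n}$, and adjoin all these (together with their degeneracies and their $\mathrm{Aut}(T)$-orbits) to form $M_n$. The equivariant choice is possible because $\mathrm{Aut}(T)$ acts on the set of homotopy classes and one only needs a section of the orbit map; since normality of $Y$ means $\mathrm{Aut}(T)$ acts freely on nondegenerate $T$-cells, the orbits are free and a section exists without obstruction. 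One checks $M$ is a dendroidal subset, that $M\rt X$ is again an operadic fibration (the lifting problems for $M$ can be solved inside $M$ by first solving them in $Y$ and then homotoping the solution into the chosen representative), and that $M$ is minimal: a self--weak-equivalence of $M$ over $X$ preserves boundaries and fiberwise-homotopy classes, hence by the uniqueness of representatives must be the identity on nondegenerate cells, so it is an isomorphism.

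For part (a), the inclusion $M\hookrightarrow Y$ is a weak equivalence because it is a "deformation retract up to homotopy" built level by level: on each newly added $T$-cell of $Y$ one has, by construction, a fiberwise homotopy to a cell of $M$, and these can be assembled (again using the shuffle decomposition of $\Omega[T]\otimes\Delta[1]$ and the operadic fibration property of $p$) into a global fiberwise strong deformation retraction $H\colon Y\otimes\Delta[1]\rt Y$ onto $M$ with retraction $r$. For part (b), when $X$ is also normal I must upgrade $r$ from a weak equivalence to a trivial fibration, i.e.\ show $r$ has the right lifting property against every normal monomorphism $\dau\Omega[T]\rt\Omega[T]$. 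Given a lifting problem, the outer square provides a cell of $Y$ whose boundary maps into $M$ along $r$; the point is that $r$ is already the identity on $M$, so solving the lift amounts to finding a cell of $Y$ over the given cell of $X$ with prescribed boundary in $M$, which exists since $p$ is a fibration, and then the construction of $M$ guarantees its chosen representative has the same boundary -- this is where normality of $X$ is used, to ensure the cell of $X$ has a well-defined nondegenerate/degenerate decomposition so that the representative can be matched on the nose rather than merely up to homotopy.

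\emph{Main obstacle.} The genuinely new difficulty, absent from the simplicial and $\infty$-categorical cases, is handling the automorphisms: every inductive choice of a homotopy representative must be made $\mathrm{Aut}(T)$-equivariantly, and more seriously the \emph{pullback-instability} noted in the remark means one cannot reduce to fibers and must argue directly with cells. I expect the hard part to be step (b): showing the retraction is a trivial fibration when $X$ is normal, since Quillen's classical argument (that a minimal-fibration retraction is automatically a trivial fibration) fails here, so one has to extract the right lifting property by hand from the explicit cell-by-cell construction of $M$, keeping track of how the chosen representatives interact with the $\mathrm{Aut}(T)$-orbits over the (now normal) base.
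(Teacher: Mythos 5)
Your overall strategy (induction over the skeletal filtration, cell-by-cell choice of homotopy representatives, a Quillen-style argument for the retraction) is the right one, but there is a genuine gap at the central step, and it is exactly the point where the dendroidal case differs from the simplicial one. You claim that because $\mathrm{Aut}(T)$ acts freely on the nondegenerate $T$-cells of the normal object $Y$, one can choose an $\mathrm{Aut}(T)$-equivariant set of representatives containing exactly one cell from each fiberwise-homotopy class. This is impossible in general: the relevant action for this choice is not the free action on cells but the induced action on \emph{homotopy classes}, and that action can have nontrivial stabilizers. The paper exhibits a normal $\infty$-operad containing a $2$-corolla $x$ with $x\neq\tau^{*}x$ (where $\tau$ is the nontrivial automorphism of $C_2$) but with $x$ fiberwise homotopic to $\tau^{*}x$ relative to the boundary, and this phenomenon persists under any retraction. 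For such a class, any $\mathrm{Aut}(T)$-stable set of representatives (and $M(T)$ must be $\mathrm{Aut}(T)$-stable for $M$ to be a dendroidal subset) meeting the class exactly once would have to contain a $\tau$-fixed nondegenerate cell, contradicting freeness. Consequently no deformation retract of $Y$ can satisfy the strict condition ``homotopic rel boundary implies equal'', and your minimality argument (``uniqueness of representatives forces a self-equivalence to be the identity on nondegenerate cells'') breaks down. The correct fix is to coarsen the equivalence relation --- declare $y_0\sim y_1$ when $y_0$ is fiberwise homotopic rel boundary to $\phi^{*}y_1$ for some $\phi\in\mathrm{Aut}(T)$ --- attach one cell per coarse class, and weaken the target property to: homotopic rel boundary implies equal \emph{up to an automorphism of $T$}. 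One must then still prove that this weaker (``skeletal'') property implies minimality; this is no longer a one-line uniqueness argument but a genuine induction establishing separately that a self-map homotopic to the identity is injective and surjective on each skeleton, using normality of $Y$ to dispose of the residual automorphisms.

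Two further points. First, for an \emph{operadic} fibration the cylinder $\Delta[1]\otimes(-)$ is not adequate (its endpoint inclusions are not operadic trivial cofibrations); one must use the interval $J$, the dendroidal nerve of the contractible groupoid on two objects, to make fiberwise homotopy rel boundary an equivalence relation. Second, in part (b) your account of where normality of $X$ enters is off: Quillen's argument goes through almost verbatim, producing a cell $z$ of $Y$ with $\partial z=y$ and with $r(z)$ homotopic rel boundary to $x$ in $M$; the weakened skeletality then only yields $x=\phi^{*}r(z)$ for some $\phi\in\mathrm{Aut}(T)$, and normality of $X$ is used precisely to conclude $\phi=\mathrm{id}$ from $q(x)=\phi^{*}q(x)$ --- not to control degenerate decompositions in $X$.
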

The proof of this theorem appears in Section \ref{sec:proof} and proceeds by induction along the skeletal filtration of the domain $Y$, analogously to the classical case of simplicial sets.

One may also find minimal models for the fibrations in the covariant and stable model structures \cite{bas12} on dendroidal sets, or any other left Bousfield localization of the operadic model structure on dendroidal sets. Indeed, any fibration $p\colon Y\rt X$ in a left Bousfield localization of the operadic model structure is in particular an operadic fibration. The associated minimal operadic fibration is a retract of $p$ and therefore a local fibration. It is minimal in the localized model structure since the local weak equivalences between local fibrations over $X$ coincide with the operadic weak equivalences.

The same argument shows that any left fibration $Y\rt X$ of dendroidal sets admits a minimal model. Such a left fibration is not quite a fibration in a certain model category, but instead it defines a fibrant object in the covariant model structure on the over-category $\cat{dSet}/X$. This model structure has been constructed in \cite{heu12}, where it is also shown to be Quillen equivalent to the model category of algebras (in $\sSet$) over the simplicial operad associated to $X$.

A map $f\colon X\rt X'$ between dendroidal sets induces a Quillen pair between the covariant model structures
$$\xymatrix{
f_!\colon \cat{dSet}{/X} \ar@<1ex>[r] & \cat{dSet}{/X'}\colon f^*\ar@<1ex>[l]
}$$
which is a Quillen equivalence whenever $f$ is an operadic weak equivalence (\cite{heu12}, Proposition 2.4). As such, one obtains a (relative) functor
$$\entrymodifiers={+!!<0pt,\fontdimen22\textfont2>}\xymatrix@1{
\mm{Alg}\colon \cat{dSet}^{\text{op}}\ar[r] & \cat{ModelCat}^{\text{R}} & X\ar@{|->}[r] & \left(\dSet/X\right)^{\text{cov}}
}$$
taking values in model categories with right Quillen functors between them. We use the theory of minimal fibrations to prove the following
\begin{proposition}\label{prop:descentforleftfibrations}
The functor $\mm{Alg}$ preserves homotopy pullbacks. More precisely, for any diagram of dendroidal sets $X_1\leftarrow X_0\rt X_2$ in which both arrows are cofibrations, the natural adjoint pair
$$\xymatrix{
\colim\colon \cat{dSet}{/X_1}\times^h_{\cat{dSet}{/X_0}} \cat{dSet}{/X_2} \ar@<1ex>[r] & \cat{dSet}{/X_1\cup_{X_0} X_2}\ar@<1ex>[l]\colon \mm{pullback}
}$$
establishes a Quillen equivalence between the homotopy pullback model structure and the covariant model structure on $\cat{dSet}{/X_1\cup_{X_0} X_2}$.
\end{proposition}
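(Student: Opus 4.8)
The plan is to deduce the proposition from the minimal model theorem (Theorem~\ref{thm:existenceofminimalmodels}), in a relative form. Write $X = X_1 \cup_{X_0} X_2$, with structure maps $k_i \colon X_i \rt X$ for $i = 1, 2$, $k_0 \colon X_0 \rt X$, and $j_i \colon X_0 \rt X_i$. Since $X_0 \rt X_1$ and $X_0 \rt X_2$ are monomorphisms, so are $k_1$ and $k_2$; the pushout square is also a pullback, so $X_1 \times_X X_2 \cong X_0$, $X_0 \times_X X_i \cong X_0$ and $X_i \times_X X_i \cong X_i$. The functor $\mm{pullback}$ sends a left fibration $Y \rt X$ to the compatible system consisting of $k_1^* Y \rt X_1$, $k_2^* Y \rt X_2$ and the identity equivalence $j_1^* k_1^* Y = Y \times_X X_0 = j_2^* k_2^* Y$ over $X_0$; this is a fibrant object of the homotopy pullback model structure, so $\colim \dashv \mm{pullback}$ is a Quillen pair. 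I would then invoke that such a Quillen pair is a Quillen equivalence as soon as its derived counit is a natural isomorphism --- equivalently, $\mathbf{R}\mm{pullback}$ is fully faithful --- and $\mathbf{R}\mm{pullback}$ is essentially surjective. So two things remain to be checked.

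For the counit, the essential input is that $\dSet$ is a presheaf topos, hence locally cartesian closed, so pullback along $Y \rt X$ preserves colimits; therefore $\colim\bigl(\mm{pullback}(Y)\bigr) = (Y \times_X X_1) \cup_{Y \times_X X_0} (Y \times_X X_2) \cong Y \times_X X = Y$, naturally in $Y$. After replacing $Y$ by a normal left fibration --- a cofibrant replacement in $\dSet/X$, which remains a left fibration over $X$ --- the system $\mm{pullback}(Y)$ is moreover cofibrant, because $Y \times_X X_0 \rt Y \times_X X_i$ is a pullback of a normal monomorphism; so the underived and derived counit coincide and the identification above shows the derived counit is an isomorphism. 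It then remains to prove that every fibrant object of the homotopy pullback model structure is weakly equivalent to one of the form $\mm{pullback}(Y)$ for a left fibration $Y \rt X$.

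Let $A$ be such a fibrant object, given by left fibrations $Y_1 \rt X_1$ and $Y_2 \rt X_2$ together with a weak equivalence $j_1^* Y_1 \simeq j_2^* Y_2$ over $X_0$; after replacing the $Y_i$ by normal left fibrations we may assume $j_1^* Y_1$ and $j_2^* Y_2$ have normal domain. The idea is to rigidify the gluing equivalence after passing to minimal models. Choose minimal models $N_i \hookrightarrow j_i^* Y_i$ over $X_0$ by Theorem~\ref{thm:existenceofminimalmodels}; the gluing equivalence makes $N_1$ and $N_2$ weakly equivalent over $X_0$, and since a weak equivalence between minimal fibrations over a fixed base is an isomorphism (a direct consequence of the definition of minimality), I may fix an isomorphism $N_1 \cong N_2$ over $X_0$ compatible with the gluing datum and write $N$ for the resulting common minimal fibration, a sub-object of $j_1^* Y_1$ and, through the isomorphism, of $j_2^* Y_2$. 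Next I would apply a relative form of Theorem~\ref{thm:existenceofminimalmodels}: for the left fibration $Y_1 \rt X_1$ with normal domain, the sub-object $X_0 \hookrightarrow X_1$ and the minimal model $N \hookrightarrow j_1^* Y_1$ over $X_0$, there is a minimal model $M_1 \hookrightarrow Y_1$ over $X_1$, a fiberwise strong deformation retract of $Y_1$, with $j_1^* M_1 = N$. This should follow from the same induction along the skeletal filtration of the domain used for Theorem~\ref{thm:existenceofminimalmodels}, arranged so that the non-degenerate dendrices of $Y_1$ lying over $X_0$ --- which are precisely those of $j_1^* Y_1$ --- are treated first, their contribution being dictated by $N$, and the remaining ones afterwards as before. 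Choosing likewise $M_2 \hookrightarrow Y_2$ over $X_2$ with $j_2^* M_2 = N$, put $Y := M_1 \cup_N M_2$ over $X$. Pullback along $k_1$ preserves colimits, so $k_1^* Y = M_1 \cup_N N = M_1$ by the identifications above together with $j_2^* M_2 = N$; symmetrically $k_2^* Y = M_2$ and $k_0^* Y = N$. Moreover $Y \rt X$ is a left fibration: left fibrations are characterised by a right lifting property against a set of maps with representable codomains, every map $\Omega[T] \rt X$ factors through $k_1$ or $k_2$ (since every non-degenerate dendrex of $X$ comes from $X_1$ or $X_2$), and the corresponding lifting problems against $Y$ therefore reduce to ones against the left fibrations $M_1 \rt X_1$ or $M_2 \rt X_2$. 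Finally $\mm{pullback}(Y) = (M_1 \leftarrow N \rt M_2)$, and the deformation retractions $M_i \hookrightarrow Y_i$ --- weak equivalences over $X_i$ --- together with the compatible inclusion $N \hookrightarrow j_1^* Y_1$ assemble, up to a short zigzag correcting for strictness of the gluing data, into a weak equivalence $\mm{pullback}(Y) \simeq A$. This yields essential surjectivity, and with it the proposition.

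The bulk of the argument, and the step I expect to be the main obstacle, is the relative minimal model theorem invoked above. The difficulty --- already latent in Theorem~\ref{thm:existenceofminimalmodels}, but here genuinely unavoidable on account of the failure of pullback-stability of minimality (Remark~\ref{rem:minimalfibrationsarenotpullbackstable}) --- is that one cannot simply glue independently chosen minimal models of $Y_1$ and $Y_2$: their restrictions to $X_0$ need be neither minimal nor equal, so the minimal model over $X_1$ has to be constructed from the outset so as to restrict to the prescribed minimal model $N$ over $X_0$, and symmetrically over $X_2$.
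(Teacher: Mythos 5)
Your reduction to (i) the derived counit being an isomorphism and (ii) essential surjectivity of the derived pullback functor is sound, and your treatment of (i) --- pullback along $Y\rt X$ preserves colimits since $\dSet$ is a presheaf topos, and $\mm{pullback}(Y)$ is projectively cofibrant once $Y$ has been replaced by a normal left fibration --- is correct and in fact more explicit than what the paper writes. The divergence is in (ii), which is the paper's Proposition \ref{prop:gluingleftfibrations}. There you route everything through a \emph{relative} minimal model theorem (``construct a minimal model of $Y_1\rt X_1$ restricting to a prescribed minimal model $N$ over $X_0$''), which you motivate by the failure of pullback-stability of minimality (Remark \ref{rem:minimalfibrationsarenotpullbackstable}) and then only sketch. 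This is the load-bearing step of your argument and it is not proved. It is plausibly true --- a fiberwise homotopy over $X_1$ between elements lying over $X_0$ automatically takes values in $j_1^*Y_1$, so the equivalence classes in the inductive construction of Proposition \ref{prop:skeletalmodels} never mix elements over $X_0$ with the rest, and one can choose the representatives over $X_0$ to be those of $N$ --- but writing this down carefully amounts to redoing a good part of Section \ref{sec:proof}, so as it stands this is a genuine gap.

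The paper avoids the relative theorem entirely by observing that your motivating worry does not apply in this situation: by Corollary \ref{cor:pullbacksofminimalfibrations} and the Example following it, minimality \emph{is} preserved by base change along a monomorphism, and the maps $X_0\rt X_i$ are monomorphisms. The paper therefore takes \emph{independently chosen} minimal retracts $r_i\colon Y_i\rt M_i$ over $X_i$ together with a minimal model $M_0$ of $Y_0$ over $X_0$; the canonical composites $M_0\rt Y_0\rt Y_i\times_{X_i}X_0\rt M_i\times_{X_i}X_0$ are then weak equivalences between minimal fibrations over $X_0$, hence isomorphisms, which rigidifies the gluing datum and makes both squares strictly cartesian. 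From there the gluing of $M_1$ and $M_2$ along $M_0$ and the locality argument for left fibrations are the same as yours. So your overall architecture is fine and your counit argument is a nice shortcut, but you should either prove the relative minimal model theorem in full or, better, replace it by the absolute theorem combined with Corollary \ref{cor:pullbacksofminimalfibrations}, which is what the paper does.
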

Informally, this proposition asserts that algebras over a homotopy pushout of $\infty$-operads can equivalently be described as (homotopy) matching triples of algebras over the individual pieces of the homotopy pushout. We will explain and prove Proposition \ref{prop:descentforleftfibrations} in Section \ref{sec:applications}, where we also use the theory of minimal left fibrations to give an elementary proof of a result from \cite{heu12} about weak equivalences between left fibrations.

In the Appendix, we briefly discuss how the arguments of the present paper yield a general existence theorem for minimal fibrations over a large class of so-called \emph{generalised} Reedy categories, providing a common generalization of Cisinski's result for strict Reedy categories \cite{cis14} and ours for dendroidal sets. As an application of this extended result, we have included a gluing result for parametrized connective spectra, analogous to Proposition \ref{prop:descentforleftfibrations}.

\section{Preliminaries on dendroidal sets}\label{sec:preliminariesondendroidalsets}
Recall that the category of dendroidal sets is the category of set-valued presheaves on the category $\Omega$ of finite rooted trees \cite{moe10, moe07, moe09}. The category $\Omega$ comes equipped with two wide subcategories $\Omega^+$ (resp. $\Omega^-$), whose arrows are those maps of trees that induce an injection (resp. surjection) on edges. We will call maps in $\Omega^+$ face maps and maps in $\Omega^-$ degeneracy maps. The intersection $\Omega^+\cap \Omega^-$ consists of the isomorphisms in $\Omega$ and every map in $\Omega$ factors essentially uniquely as a degeneracy map, followed by a face map.

For any finite rooted tree $T$, the degree of $T$ is given by the number of vertices of $T$. It is immediate that non-invertible arrows in $\Omega^+$ (resp.~ $\Omega^-$) raise (resp.~ lower) the degree. Altogether, this gives the category $\Omega$ the structure of a (generalised) \emph{Reedy category}.

Any degeneracy map can be realized as the composition of isomorphisms and elementary degeneracy maps, i.~e.~ maps $\sigma_v\colon T\rt T\setminus v$ obtained by picking a vertex $v$ of $T$ with a single input, removing that vertex and identifying the incoming and outgoing edges. An elementary degeneracy $T\rt T\setminus v$ admits precisely two sections, obtained by choosing an edge above or below the vertex $v$ and considering the face map induced by contracting this edge.
\begin{lemma}\label{lem:degeneraciesdeterminedbysections}
Any degeneracy map is a split epimorphism and two degeneracy maps $\sigma, \tau\colon T\rt S$ are the same if they have the same set of sections.
\end{lemma}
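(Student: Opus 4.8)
The plan is to reduce to the case of elementary degeneracies by using the factorization of an arbitrary degeneracy as a composite of isomorphisms and elementary degeneracy maps. First I would observe that the first assertion -- that any degeneracy map is a split epimorphism -- is immediate: an isomorphism is trivially split, an elementary degeneracy $\sigma_v\colon T\rt T\setminus v$ has the two explicit sections described in the excerpt (contract the edge above $v$, or the edge below $v$), and a composite of split epimorphisms is again a split epimorphism. So the content of the lemma lies in the second assertion.

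For the second assertion, suppose $\sigma,\tau\colon T\rt S$ are degeneracy maps with the same set of sections. The key observation is that the set of sections of $\sigma$ already determines $\sigma$, because a section $s$ of $\sigma$ is in particular a face map $S\rt T$, and the image of $s$ is a subtree of $T$ obtained from $T$ by contracting exactly the edges that $\sigma$ identifies; running over all sections, the union of these images recovers all the identifications made by $\sigma$, hence $\sigma$ itself (a degeneracy map of trees is completely specified by which pairs of edges it identifies, equivalently by the partition of the edge set of $T$ it induces). Concretely: write $\sigma$ as a composite $T \xrightarrow{\sigma_1} T_1 \to \cdots \to T_{k-1} \xrightarrow{\sigma_k} S$ of elementary degeneracies (absorbing isomorphisms), so $k$ is the difference in the number of vertices of $T$ and $S$. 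Each elementary degeneracy $\sigma_i$ contracts one "unary" vertex and thereby identifies one pair of adjacent edges; the composite $\sigma$ identifies the edges of $T$ into classes each of which is a directed path in $T$ through a chain of unary vertices, collapsing to a single edge of $S$. A section of $\sigma$ is then exactly a choice, for each such class, of one representative edge, subject to the constraint that the chosen edges form a subtree isomorphic to $S$ -- and since the classes are disjoint directed paths, any choice works. Thus from the sections one reads off the partition of the edge set of $T$, and this partition determines $\sigma$ up to the identification $S\cong T/{\sim}$; since $\sigma$ and $\tau$ have the same sections they induce the same partition and the same quotient map, so $\sigma=\tau$.

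I would organize the write-up as: (1) split epimorphism claim, dispatched in one line via the explicit sections and closure under composition; (2) a lemma-internal observation that a degeneracy $T\rt S$ is determined by the partition it induces on the edge set of $T$ (this is where the combinatorics of $\Omega$ enters, using that every map factors essentially uniquely as a degeneracy followed by a face map); (3) the identification of the set of sections with the set of systems of representatives for that partition, together with the remark that such a system always exists (giving an alternative proof of splitness) and that conversely the partition is recovered as the common refinement data from all sections. The main obstacle I anticipate is purely bookkeeping: being careful about "essentially unique" factorizations and the role of the automorphisms of $T$ and $S$, so that "same set of sections" is interpreted correctly -- two sections that differ by an automorphism of $T$ over $S$ should be distinguished, and one must check that the partition of edges, and not merely the isomorphism type of $S$, is what gets pinned down. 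Once that is set up, matching two degeneracies with equal section sets is straightforward.
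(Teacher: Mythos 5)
Your overall strategy is the same as the paper's: reduce everything to the induced surjection $\sigma_*\colon \mathrm{Edge}(T)\to\mathrm{Edge}(S)$, identify sections of $\sigma$ in $\Omega$ with sections of $\sigma_*$ (one representative edge per fiber), and finish with set-level combinatorics. The identification of sections with systems of representatives is correct and is exactly the paper's key step (phrased there as: any section of $\sigma_*$ is induced by the face map contracting all edges of $T$ not in its image).

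There is, however, one step that is false as stated and leaves a real hole: the parenthetical claim that ``a degeneracy map of trees is completely specified by \ldots\ the partition of the edge set of $T$ it induces,'' and the ensuing conclusion ``they induce the same partition and the same quotient map, so $\sigma=\tau$.'' Two degeneracies $T\to S$ inducing the same fiber partition need only agree up to postcomposition with an automorphism of $S$, and $S$ may have nontrivial automorphisms: take $S=C_2$ and $T$ the tree obtained by inserting a unary vertex above one leaf of $C_2$; the collapse map and its composite with the leaf-swapping automorphism of $C_2$ induce the same partition of $\mathrm{Edge}(T)$ but are distinct maps. So ``same partition'' does not yield ``same quotient map'' --- that implication is precisely what has to be proved, and your write-up asserts it rather than deriving it (your own caveat in the last paragraph points at this issue but resolves it in the wrong direction, since the partition alone is \emph{not} enough). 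The repair is short and uses the sections once more: pick any common section $s$; for every edge $f$ of $T$, $\sigma_*(f)$ is the unique edge $e$ of $S$ with $s_*(e)$ in the fiber of $f$, and the same formula computes $\tau_*(f)$, whence $\sigma_*=\tau_*$ and hence $\sigma=\tau$ since maps of trees are determined by their effect on edges. This is exactly the content of the fact the paper invokes, namely that a surjection of sets is determined by its set of sections (the sections encode both the partition and the labelling of the fibers by elements of the target); quoting that statement directly is cleaner than passing through the partition.
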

\begin{proof}
Recall that a map of trees is completely determined by its effect on the set of edges, so that the sections of a degeneracy $\sigma\colon T\rt S$ form a subset of the set of sections of the induced surjection $\sigma_*\colon \mm{Edge}(T)\rt \mm{Edge}(S)$. On the other hand, any section $i$ of $\sigma_*$ is induced by the face map $\delta\colon S\rt T$ that contracts all edges of $T$ which are not contained in the image of $i$. This face map is a section since $\sigma\delta$ induces the identity map on colours. It follows that sections of a degeneracy $\sigma$ correspond bijectively to sections of the associated surjection between sets of edges. The second assertion now follows from the fact that surjections of sets are uniquely determined by their sets of sections, while the first assertion is obvious.
\end{proof}
\begin{lemma}[\cite{moe10}, 3.1.6]\label{lem:pushoutofsplitepis}
Any pair of degeneracies $\sigma\colon S\rt S'$, $\tau\colon S\rt T$ fits into an absolute pushout square
\begin{equation}\label{diag:absolutepushoutofsplitepis}\vcenter{\xymatrix{
S\ar[r]^{\sigma}\ar[d]_{\tau} & S'\ar[d]^{\tau'}\\
T\ar[r]_{\sigma'} & T'
}}\end{equation}
in which $\sigma'$ and $\tau'$ are degeneracies as well.
\end{lemma}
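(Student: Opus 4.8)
The plan is to write down the pushout square explicitly, reduce to the case of two \emph{elementary} degeneracies by a pasting argument, and dispatch that base case by exhibiting an idempotent whose splitting realises the pushout by means of identities valid in an arbitrary category, so that absoluteness comes for free.

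Recall first that, up to isomorphism, a degeneracy $\sigma\colon S\rt S'$ is the contraction of a set $A$ of unary vertices of $S$, and that (by the discussion preceding Lemma~\ref{lem:degeneraciesdeterminedbysections}) it factors as a composite of elementary degeneracies $\sigma_v$ and isomorphisms. Given $\sigma\colon S\rt S'$ contracting $A$ and $\tau\colon S\rt T$ contracting $B$, the candidate pushout is the tree $T'$ obtained from $S$ by contracting $A\cup B$, together with the evident degeneracies $\sigma'\colon T\rt T'$ and $\tau'\colon S'\rt T'$; the square commutes because both composites $S\rt T'$ contract exactly $A\cup B$ and a map of trees is determined by its effect on edges (Lemma~\ref{lem:degeneraciesdeterminedbysections}).

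To see that this square is an \emph{absolute} pushout, I would use that absolute pushouts are stable under pasting --- any functor preserves each constituent square, hence, by the usual pushout-pasting law, their composite. Writing $\sigma$ and $\tau$ as composites of elementary degeneracies one computes the pushout of $\sigma$ along $\tau$ one elementary step at a time; at each step one pushes an elementary degeneracy along an arbitrary degeneracy, the resulting map on the ``$\sigma$-side'' is again elementary, and after running through both factorisations one has filled a grid of squares whose outer rectangle is the square in question. So it suffices to treat $\sigma=\sigma_v$, $\tau=\sigma_w$ for vertices $v,w$ of $S$. If $v=w$ the square is the identity square on $\sigma_v$, which is an absolute pushout because $\sigma_v$ is a split epimorphism (Lemma~\ref{lem:degeneraciesdeterminedbysections}). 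If $v\neq w$, put $T'=S\setminus\{v,w\}$, $\sigma'=\sigma_{\bar v}\colon S\setminus w\rt T'$ and $\tau'=\sigma_{\bar w}\colon S\setminus v\rt T'$, where $\bar v,\bar w$ are the (still unary) images of $v,w$. Among the two sections of $\sigma_v$ recalled above, choose the one $i\colon S\setminus v\rt S$ adapted to the position of $w$ relative to $v$ --- the output-side section when $w$ lies below $v$, and symmetrically --- and the matching section $i'$ of $\sigma_{\bar v}$; one then checks the intertwining identity $e'\circ\sigma_w=\sigma_w\circ e$, where $e=i\sigma_v$ and $e'=i'\sigma'$ are the associated idempotents. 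Granting this, a formal computation using only $\sigma_v i=1$, $\sigma' i'=1$, $i'\sigma'=e'$, the intertwining identity, and the existence of a section of $\sigma_w$ shows that for every cocone $(f\colon S\setminus v\rt R,\ g\colon S\setminus w\rt R)$ with $f\sigma_v=g\sigma_w$ the map $h:=g\,i'\colon T'\rt R$ is the unique filler, and that $\tau'=\sigma'\sigma_w i$. As every identity invoked is preserved by an arbitrary functor, the square is an absolute pushout.

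I expect the main obstacle to be exactly this base case: verifying $e'\sigma_w=\sigma_w i\sigma_v$ for the right choice of sections, and that $e'$ is the splitting of the degeneracy $\sigma_{\bar v}$. This amounts to a finite check on the edge-sets of small trees via Lemma~\ref{lem:degeneraciesdeterminedbysections}, but it requires a short case analysis according to whether $v$ and $w$ are non-adjacent, adjacent with $w$ below $v$, or adjacent with $v$ below $w$, and the correct section genuinely varies with the case --- which is why the full pushout cannot in general be realised by a single idempotent and the reduction through the intermediate trees of the pasting is essential.
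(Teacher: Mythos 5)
Your proposal is correct and follows essentially the same route as the paper: reduce by pasting to a pair of elementary degeneracies $\sigma_v$, $\sigma_w$, then produce compatible sections $\alpha$ of $\sigma_v$ and $\alpha'$ of $\sigma'$ with $\sigma_w\alpha=\alpha'\tau'$ (your intertwining identity $e'\sigma_w=\sigma_w e$ is equivalent to this since $\sigma_v$ is epi), which is exactly the criterion the paper invokes and which you correctly unwind into the explicit absolute-pushout computation $h=gi'$. The only deferred point is the case analysis selecting the sections, and there your tentative rule appears reversed: when $w$ sits directly below $v$ the compatible section of $\sigma_v$ is the face map contracting the edge \emph{above} $v$ (so the merged edge is sent to the shared edge $e_{\mathrm{out}}(v)=e_{\mathrm{in}}(w)$), not the output-side one.
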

\begin{proof}
Since absolute pushout squares can be pasted, it suffices to check this when $\sigma=\sigma_v\colon S\rt S\setminus v$ and $\tau=\sigma_w\colon S\rt S\setminus w$. In this case, one can easily check that the required pushout square \eqref{diag:absolutepushoutofsplitepis} can be produced by taking $T'=S\setminus \{v, w\}$ and $\sigma'$ (resp.~ $\tau'$) the elementary degeneracy removing the vertex $v$ (resp. $w$) and identifying the ingoing and outgoing edge. In the case where $v=w$, the maps $\sigma'$ and $\tau'$ are simply the identity maps.

To see that the resulting pushout square is an \emph{absolute} pushout square, it suffices to find sections $\alpha$ of $\sigma$ and $\alpha'$ of $\sigma'$ which are compatible in the sense that $\tau\alpha=\alpha'\tau'$ (see e.~g.~ \cite{ber13}). When the vertices $v$ and $w$ are the same, one can just pick any section of $\sigma=\sigma_v$ and take the identity section of $\sigma'=\mm{id}$. If $v$ is different from $w$ and $v$ is not connected to $w$ by a single edge, one can take both $\alpha$ and $\alpha'$ to be the face map contracting the edge below the vertex $v$ (seen as a vertex in $S$, resp. $S\setminus w$).

We are left with the case that the vertices $v$ and $w$ are connected by a single edge. If $v$ is the vertex directly above $w$, compatible sections are provided by letting $\alpha$ and $\alpha'$ be the face maps contracting the edge above $v$ (again seen as a vertex in $S$, resp. $S\setminus w$). If $v$ is the vertex direcly under $w$, one can take $\alpha$ and $\alpha'$ to be the face maps contracting the edge below $v$.
\end{proof}
We identify elements of a dendroidal set $X$ with maps $x\colon \Omega[T]\rt X$, where $\Omega[T]$ is the presheaf represented by the tree $T$. An element $x\colon \Omega[T]\rt X$ is called degenerate if it factors as $\Omega[T]\rt \Omega[S]\rt X$, where $T\rt S$ is a degeneracy. It follows easily from Lemma \ref{lem:pushoutofsplitepis} that any element of a dendroidal set decomposes essentially uniquely as a degeneracy of a nondegenerate element (see e.~g.~ Prop.~ 6.7 in \cite{ber08}).

For every tree $T$, there is an action of the automorphism group $\mm{Aut}(T)$ on the set of nondegenerate elements $\Omega[T]\rt X$. If $x\colon \Omega[T]\rt X$ is an  element of $X$, define its automorphism group $\mm{Aut}(x)\subseteq\mm{Aut}(T)$ to be the isotropy group of the element $x$ under this action. A map of dendroidal sets $f\colon X\rt Y$ induces a (necessarily injective) map $\mm{Aut}(x)\rt \mm{Aut}(fx)$.

A monomorphism $i\colon A\rt B$ between dendroidal sets is called \emph{normal} if a nondegenerate element of $B$ has a trivial automorphism group whenever it does not factor through $i$. In other words, $\mm{Aut}(T)$ acts freely on the set of nondegenerate elements in $B(T)\setminus A(T)$. A dendroidal set $X$ is called normal if the map $\emptyset\rt X$ is a normal monomorphism. 
\begin{remark}
In fact, for a normal monomorphism $i\colon A\rt B$ the group $\mm{Aut}(T)$ acts freely on the set of \emph{all} elements $\Omega[T]\rt B$ that do not factor through $i$ (Prop.~ 1.5 in \cite{cis09}). An easy consequence of this is the fact that any monomorphism over a normal dendroidal set is a normal monomorphism. 
\end{remark}

\subsection*{Skeletal filtration}
Let $t_n\colon \Omega_{\leq n}\rt \Omega$ be the inclusion of the full subcategory of $\Omega$ on the objects of degree $\leq n$. The $n$-skeleton of a dendroidal set $X$ is given by $X^{(n)}:=t_{n!}t_{n}^*X$. The skeleta of $X$ fit into a natural skeletal filtration
\begin{equation}\label{diag:skeletalfiltration}\xymatrix{
\emptyset= X^{(-1)}\ar[r] & X^{(0)}\ar[r] & X^{(1)}\ar[r] & \cdots \ar[r] & X.
}\end{equation}
Because every element of a dendroidal set $X$ is a degeneracy of a nondegenerate element in an essentially unique way, the maps in the skeletal filtration \eqref{diag:skeletalfiltration} are all monomorphisms and the colimit of this sequence of inclusions is the original dendroidal set $X$. Indeed, $X^{(n)}$ is the subobject of $X$ consisting of those elements $\Omega[T]\rt X$ that factor through some tree $S$ of degree $\leq n$. For example, the boundary $\dau\Omega[T]$ of a representable presheaf is defined as the $(n-1)$-skeleton $\Omega[T]^{(n-1)}$, where $n$ is the degree of the tree $T$. Explicitly, $\dau\Omega[T](S)$ is the set of maps $S\rt T$ in $\Omega$ that factor through a non-invertible face map $S'\rt T$.

When $x\colon \Omega[T]\rt X$ is an element of $X$, define the boundary $\dau x$ of $x$ to be the restriction of $x$ to $\dau\Omega[T]$. The following is a straightforward variation of Lemma 2.6 in \cite{cis14}:
\begin{lemma}\label{lem:degeneraciesdeterminedbyboundary}
Let $x, y\colon \Omega[T]\rt X$ be two degenerate elements of a normal dendroidal set $X$. If the boundaries of $x$ and $y$ agree then $x$ and $y$ are the same.
\end{lemma}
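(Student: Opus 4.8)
The plan is to follow the strategy of Lemma 2.6 in \cite{cis14} and make it robust against the automorphisms of trees by invoking the normality of $X$. By the essentially unique decomposition of elements of a dendroidal set, write $x = \sigma^*\bar x$ and $y = \tau^*\bar y$ with $\sigma\colon T\rt S$ and $\tau\colon T\rt S'$ degeneracies and $\bar x\colon \Omega[S]\rt X$, $\bar y\colon \Omega[S']\rt X$ nondegenerate; it suffices to show $\sigma^*\bar x = \tau^*\bar y$. Since $x$ and $y$ are degenerate, $\sigma$ and $\tau$ are non-invertible, hence strictly lower the degree. In particular every section $\delta\colon S\rt T$ of $\sigma$ -- which is a face map, by the description of sections of degeneracies recalled before Lemma \ref{lem:degeneraciesdeterminedbysections} -- is a \emph{non-invertible} face map (as $\deg S<\deg T$), and therefore defines an element of $\dau\Omega[T]$.

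It follows that $\delta^* x$ is computed from the restriction $\dau x$ alone. Since $\dau x = \dau y$ and $\sigma\delta = \mm{id}_S$, this gives
$$\bar x = \delta^*\sigma^*\bar x = \delta^* x = \delta^* y = (\tau\delta)^*\bar y$$
for every section $\delta$ of $\sigma$, and symmetrically $\bar y = (\sigma\delta')^*\bar x$ for every section $\delta'$ of $\tau$. Next I would show that each $\tau\delta\colon S\rt S'$ is an isomorphism. Factoring $\tau\delta$ essentially uniquely as a degeneracy $\psi$ followed by a face map $\phi$, so that $\bar x = \psi^*(\phi^*\bar y)$, the nondegeneracy of $\bar x$ together with the uniqueness of the degeneracy--nondegenerate decomposition forces $\psi$ to be invertible; hence $\tau\delta$ lies in $\Omega^+$ and $\deg S\leq \deg S'$. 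The symmetric statement gives $\deg S'\leq\deg S$, so $\deg S=\deg S'$ and $\tau\delta$, being a face map between trees of equal degree, is an isomorphism; likewise every $\sigma\delta'$ is an isomorphism. This is the point where the Reedy structure of $\Omega$ is used.

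The remaining, and in my view most delicate, step is to pin these isomorphisms down using normality. For any two sections $\delta_1,\delta_2$ of $\sigma$ we have $(\tau\delta_1)^*\bar y=\bar x=(\tau\delta_2)^*\bar y$, so the automorphism $(\tau\delta_1)(\tau\delta_2)^{-1}$ of $S'$ fixes $\bar y$; since $X$ is normal and $\bar y$ is nondegenerate, $\mm{Aut}(\bar y)$ is trivial, whence $\tau\delta$ equals one fixed isomorphism $g\colon S\rt S'$ for all sections $\delta$ of $\sigma$, and $\bar x = g^*\bar y$. Applying the same reasoning with $\bar x$ in place of $\bar y$ (again using that $X$ is normal) shows $\sigma\delta'=g^{-1}$ for all sections $\delta'$ of $\tau$.

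Finally I would compare the two degeneracies $\tau$ and $g\sigma\colon T\rt S'$. Combining the relations $\tau\delta=g$ and $\sigma\delta'=g^{-1}$ one checks that a face map $\epsilon\colon S'\rt T$ is a section of $g\sigma$ if and only if $\epsilon g$ is a section of $\sigma$, if and only if $\tau\epsilon=\mm{id}_{S'}$, i.e. if and only if $\epsilon$ is a section of $\tau$; so $\tau$ and $g\sigma$ have the same sets of sections and hence coincide by Lemma \ref{lem:degeneraciesdeterminedbysections}. Therefore $x=\sigma^*\bar x=\sigma^*g^*\bar y=(g\sigma)^*\bar y=\tau^*\bar y=y$, as desired. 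The main obstacle compared with the simplicial case is precisely the third step: without normality one cannot rule out that $\tau\delta$ varies with $\delta$ by a nontrivial automorphism fixing $\bar y$, and then the comparison of sections in the last step would break down.
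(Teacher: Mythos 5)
Your proof is correct and follows essentially the same route as the paper's: the same degeneracy--nondegenerate decomposition, the same degree argument forcing $\tau\delta$ and $\sigma\delta'$ to be isomorphisms, normality of $X$ to rigidify these isomorphisms, and Lemma \ref{lem:degeneraciesdeterminedbysections} to compare sets of sections. The only (cosmetic) difference is that you prove $\tau=g\sigma$ where the paper proves the equivalent identity $\sigma=\sigma\beta\tau$, and your explicit remark that sections of $\sigma$ are non-invertible face maps, hence land in $\dau\Omega[T]$, is a welcome justification of a step the paper leaves implicit.
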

\begin{proof}
Write $x=\sigma^*\ol{x}$ and $y=\tau^*\ol{y}$ where $\ol{x}\colon\Omega[S]\rt X$ and $\ol{y}\colon \Omega[S']\rt X$ are nondegenerate and $\sigma\colon T\rt S$ and $\tau\colon T\rt S'$ are non-invertible degeneracy maps. Let $\alpha$ be any section of $\sigma$ and let $\beta$ be any section of $\tau$. Since the boundaries of $x$ and $y$ agree, we have that
$$
\ol{x}= \alpha^*x = \alpha^*y = (\tau\alpha)^*\ol{y}
$$
and similarly $\ol{y}=(\sigma\beta)^*\ol{x}$. If the composite map $\tau\alpha$ in $\Omega$ could be factored as a non-invertible degeneracy map followed by a face map, then $\ol{x}$ would be a degenerate element. In other words, the map $\tau\alpha\colon S\rt T\rt S'$ has to be a face map and in particular the degree of $S$ is less than or equal to the degree of $S'$. Applying the same argument to the composite $\sigma\beta$ shows that the degrees of $S$ and $S'$ agree, which in turn implies that $\tau\alpha$ and $\sigma\beta$ are isomorphisms. Furthermore, we have that
$$
(\tau\alpha)^*(\sigma\beta)^*\ol{x} = (\tau\alpha)^*\ol{y} = \ol{x}
$$
Since $\ol{x}$ is a nondegenerate element of a normal dendroidal set, it has no nontrivial automorphisms. From this we conclude that
\setlength{\leftmargini}{2em}
\begin{itemize}
 \item[($\star$)] for \emph{any} choice of sections $\alpha\in \Gamma(\sigma)$ and $\beta\in \Gamma(\tau)$, the map $\sigma\beta$ is inverse to $\tau\alpha$.
\end{itemize}
We claim that $\sigma=\sigma\beta\tau$, in which case we conclude that
$$
y=\tau^*\ol{y} = \tau^*(\sigma\beta)^*\ol{x} = \sigma^*\ol{x} = x.
$$
To see that $\sigma=\sigma\beta\tau$, it suffices to check that both degeneracy maps have the same set of sections, by Lemma \ref{lem:degeneraciesdeterminedbysections}. Our conclusion ($\star$) shows that $\alpha$ is a section of $\sigma\beta\tau$ as soon as $\alpha$ is a section of $\sigma$. For the converse, suppose that $\alpha$ is a section of $\sigma\beta\tau$. Since $\sigma\beta$ is an isomorphism, we have that $\tau\alpha$ is an inverse to $\sigma\beta$ and consequently $\alpha\sigma\beta$ is a section of $\tau$. 

But now observe that ($\star$) implies that the isomorphism $\sigma\beta$ is actually independent of the chosen section $\beta\in \Gamma(\tau)$. This means that $\sigma\beta=\sigma(\alpha\sigma\beta)$, which in turn implies that $\alpha$ is a section of $\sigma$.
\end{proof}
When $X$ is a normal dendroidal set, the skeletal filtration \eqref{diag:skeletalfiltration} can be obtained by attaching cells \cite{ber08}. More precisely, each inclusion $X^{(n-1)}\rt X^{(n)}$ fits into a pushout square
$$\xymatrix{
\coprod_{|T_\alpha|=n} \dau\Omega[T_\alpha] \ar[r]\ar[d] & X^{(n-1)}\ar[d]\\
\coprod_{|T_\alpha|=n} \Omega[T_\alpha]\ar[r]^-{(x_\alpha)} & X^{(n)}.
}$$
We will call the resulting elements $x_\alpha\colon \Omega[T_\alpha]\rt X$ \emph{generating} nondegenerate elements. For every nondegenerate element $x$ of $X$ there is a unique generating nondegenerate element $x_\alpha$, together with a unique automorphism $\phi$ of $T_\alpha$ such that $x=\phi^*x_\alpha$.

\subsection*{Cylinders and homotopies}\label{sec:cylinders}
The category of dendroidal sets admits a left proper model structure (called the \emph{operadic} model structure) in which the cofibrations are the normal monomorphisms and the fibrant objects are the $\infty$-operads \cite{cis09}. For each dendroidal set $X$, let $J\otimes X$ be the Boardman-Vogt tensor product of $X$ with the dendroidal nerve $J$ of the groupoid $\{0\simeq 1\}$ with objects $0$ and $1$, together with a unique isomorphism between them. 

Taking the tensor product of $X$ with the functors $\{0, 1\}\rt J\rt *$ provides a factorization of the fold map
$$\xymatrix{
X\coprod X \simeq \{0, 1\}\otimes X\ar[r] & J\otimes X\ar[r]^-\sigma & X.
}$$
When $X$ is a normal dendroidal set, the first map is a cofibration and the second map is a weak equivalence. 

It follows immediately from this description that for any monomorphism $A\rt B$ between normal dendroidal sets, the map
$$\xymatrix{
J\otimes A\cup_{\{0, 1\}\otimes A} \{0, 1\}\otimes B\ar[r] & J\otimes B
}$$
is a cofibration. Similarly, the map
$$\xymatrix{
J\otimes A\cup_{\{i\}\otimes A} \{i\}\otimes B\ar[r] & J\otimes B
}$$
is a trivial cofibration for $i=0, 1$. These properties make sure that the notion of homotopy induced by the cylinder $J$ is well-behaved. For example, let $p\colon Y\rt X$ be a map and let $y_0, y_1\colon \Omega[T]\rt Y$ be two elements of $Y$. Then a fiberwise homotopy between $y_0$ and $y_1$, relative to the boundary $\dau\Omega[T]$, is given by a map $H$ which fits into a commuting diagram
$$\xymatrix{
J\otimes\dau\Omega[T] \ar[d]\ar[r]^\sigma & \dau\Omega[T] \ar[r] & Y\ar[d]^p\\
J\otimes\Omega[T]\ar[r]_\sigma\ar[rru]^H & \Omega[T] \ar[r] & X
}$$
whose restriction to $\{i\}\otimes \Omega[T]$ agrees with the map $y_i$ (for $i=0, 1$).
\begin{lemma}\label{lem:Jhomotopyisanequivalencerelation}
Let $p\colon Y\rt X$ be an operadic fibration. Then fiberwise homotopy relative to the boundary provides an equivalence relation on the set of elements $Y(T)$. More generally, for any monomorphism $A\rt B$ between normal dendroidal sets the notion of fiberwise homotopy relative to $A$ provides an equivalence relation on $\Hom(B, Y)$.
\end{lemma}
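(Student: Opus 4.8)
The plan is to mimic the classical simplicial argument. Reflexivity and symmetry will be formal and use nothing about $p$: for $b\colon B\rt Y$ over $X$ the composite $J\otimes B\xrightarrow{\sigma}B\xrightarrow{b}Y$ is a fiberwise homotopy from $b$ to itself which is constant on $A$, and precomposing any homotopy with $\varepsilon\otimes\mathrm{id}_B$ --- where $\varepsilon\colon J\rt J$ is the automorphism induced by the nontrivial automorphism of the groupoid $\{0\simeq 1\}$ --- interchanges its two endpoints and, since $\sigma\circ(\varepsilon\otimes\mathrm{id}_B)=\sigma$, preserves the property of being relative to $A$. (This is the one step that genuinely uses that $J$ is the nerve of a \emph{groupoid}.) So the real content is transitivity.

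For transitivity I would argue as one composes homotopies with a $2$-simplex in simplicial sets. Given fiberwise homotopies $H\colon f\simeq g$ and $H'\colon g\simeq h$ relative to $A$, work inside the ``square'' $J\otimes J\otimes B$ and form the subobject
\[
P\ :=\ \bigl(\{0\}\otimes J\otimes B\bigr)\cup\bigl(\{1\}\otimes J\otimes B\bigr)\cup\bigl(J\otimes\{0\}\otimes B\bigr)\cup\bigl(J\otimes J\otimes A\bigr),
\]
that is, three of the four faces spanned by the first two tensor factors together with the part over $A$. Gluing the constant homotopy on $f$ along $\{0\}\otimes J\otimes B$, the homotopy $H$ along $J\otimes\{0\}\otimes B$, the homotopy $H'$ along $\{1\}\otimes J\otimes B$ and the constant homotopy on $f|_A$ along $J\otimes J\otimes A$ yields a well-defined map $M_0\colon P\rt Y$; the only compatibilities to note are that $H$ ends and $H'$ begins at $g$, and that each piece is constant over $A$. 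Since $pH$, $pH'$ and the constant homotopies all factor through $\sigma$, the map $pM_0$ factors through the projection $P\rt B$ and the structure map $B\rt X$, and so extends over $J\otimes J\otimes B$. Provided the inclusion $P\rt J\otimes J\otimes B$ is an operadic trivial cofibration, the fibration $p$ then supplies a lift $M$, and $M|_{J\otimes\{1\}\otimes B}$ is a fiberwise homotopy from $f$ to $h$ which is relative to $A$ because $M|_{J\otimes J\otimes A}$ is constant.

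The step I expect to be the main obstacle --- though still routine --- is checking that $P\rt J\otimes J\otimes B$ is an operadic trivial cofibration, since $\otimes$ does not make $\cat{dSet}$ into a monoidal model category and one must work directly with the pushout--product properties of $J$ recorded above. Writing $C:=J\otimes B$ and $C_0:=\{0\}\otimes B\cup J\otimes A\subseteq C$, one has $P=(\{0\}\otimes C)\cup(\{1\}\otimes C)\cup(J\otimes C_0)$ inside $J\otimes C$. Here $C_0\rt C$ is the pushout--product of the endpoint inclusion $\{0\}\rt J$ with the normal monomorphism $A\rt B$, hence a trivial cofibration; consequently the pushout--product $P_0:=(J\otimes C_0)\cup(\{0\}\otimes C)\rt J\otimes C$ of $\{0\}\rt J$ with the monomorphism $C_0\rt C$ is a trivial cofibration; and $P$ arises from $P_0$ by pushing out $C_0\rt C$ along $\{1\}\otimes C_0\rt P_0$, so $P_0\rt P$ is a trivial cofibration as well. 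By two--out--of--three $P\rt J\otimes C$ is a weak equivalence, and it is a normal monomorphism since the tensor $J\otimes J\otimes B$ of normal dendroidal sets is again normal; hence it is an operadic trivial cofibration. Finally, the assertion for $\Hom(B,Y)$ is proved by exactly this argument, and the assertion for $Y(T)$ is the special case $A\rt B=\partial\Omega[T]\rt\Omega[T]$.
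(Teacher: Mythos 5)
Your proof is correct and follows essentially the same route as the paper: transitivity is obtained by filling three faces of the square $J\otimes J\otimes B$ (together with the part over $A$) against the fibration $p$, the relevant inclusion being shown to be a trivial cofibration via the pushout--product properties of $J$ recorded in Section 2. The paper leaves reflexivity, symmetry and the trivial-cofibration verification as ``standard'' and only records transitivity (in the slightly stronger form where one of the two homotopies need not be relative to $A$, for later use in Proposition 3.3), but your more explicit treatment of these points is accurate.
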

\begin{proof}
This is a standard argument using the homotopy extension and lifting property. For later reference (cf.~ the proof of Proposition \ref{prop:skeletalmodels}), we prove transitivity in a slightly more general setting. Let $x, y, z\colon B\rt Y$ be maps and suppose that there are fiberwise homotopies $g\colon x\simeq y$ and $h\colon y\simeq z$, where $h$ is a fiberwise homotopy rel $A$. Then $x\simeq z$ via a homotopy that agrees with $g$ when restricted to $A$. Indeed, consider the diagram
$$\xymatrix{
J\otimes \big(J\otimes A\cup \{0,1\} \otimes B\big) \cup \{0\}\otimes \big(J\otimes B\big)\ar[d]\ar[rr]^-{H} & & Y\ar[d]^p\\
J\otimes (J\otimes B)\ar[r]\ar@{..>}[rru]_-L & B\ar[r] & X
}$$
where the map $H$ is given by $H(s, t, a) = g(t, a)$ on $J\otimes(J\otimes A)$, while it is given on $J\otimes \big(\{0, 1\} \otimes B\big)\cup \{0\}\otimes \big(J\otimes B\big)$ by
$$
H(s, 0, b) = x(b) \qquad\qquad H(s, 1, b) = h(s, b) \qquad\qquad H(0, t, b) = g(t, b)
$$ 
(writing this without these formulas is a bit troublesome). Since $p$ is a fibration, there is a lift $L$ as indicated. The restriction of $L$ to $\{1\}\otimes (J\otimes B)$ provides a fiberwise homotopy between $x$ and $z$, which agrees with $g$ when restricted to $A$.
\end{proof}

\section{Existence of minimal fibrations}\label{sec:proof}
This section contains the proof of Theorem \ref{thm:existenceofminimalmodels}, which asserts that any fibration of dendroidal sets $Y\rt X$ admits a minimal fibration as a deformation retract, at least when $Y$ is normal. The idea of the proof is to construct a deformation retract of the fibration $Y\rt X$ which is \emph{skeletal} (Definition \ref{def:skeletalfibration}) by induction over the skeletal filtration of $Y$. We then show that any such skeletal fibration is a minimal fibration.

\subsection{Skeletal fibrations}
The following definition is an immediate analogue of the notion of `skeletality' appearing in the classical literature on simplicial sets (where it is usually called minimality, anticipating Corollary \ref{cor:minimalisskeletal}):
\begin{definition}\label{def:skeletalfibration}
Let $p\colon Y\rt X$ be an operadic fibration of dendroidal sets. We will say that $p$ is a \emph{skeletal} fibration if for any two elements $y_0, y_1\colon \Omega[T]\rt Y$ which are fiberwise homotopic relative to their boundary, there is an automorphism $\phi\in\mm{Aut}(T)$ such that $y_0=\phi^*y_1$.
\end{definition}
There is a second natural extension of the notion of `skeletality' to dendroidal sets, where one requires two homotopic elements to be equal. This condition is too restrictive for our purposes. Indeed, the following example demonstrates that there are dendroidal sets that cannot have a deformation retract satisfying this stricter condition of skeletality:
\begin{example}\label{ex:dendroidalsetswhicharenotstrictlyskeletal}
Let $C_2$ be the 2-corolla and let $\eta$ be tree with a single edge and no vertices. Their associated dendroidal sets are $\Omega[C_2]$ and $\Delta[0]:=\Omega[\eta]$. The 2-corolla $C_2$ has a single nontrivial automorphism $\tau$ of order 2 and its boundary $\dau \Omega[C_2]$ is the disjoint union of three edges. Define $J\otimes_\tau \Omega[C_2]$ to be the pushout
$$\xymatrix{
\Omega[C_2]\coprod \Omega[C_2]\ar@{>->}[r]\ar[d]_{(\mm{id}, \tau)} & J\otimes \Omega[C_2]\ar[d]^p\\
\Omega[C_2]\ar@{>->}[r] & J\otimes_\tau \Omega[C_2].
}$$
The bottom map defines an element of $J\otimes_\tau \Omega[C_2]$ which is $J$-homotopic to its conjugate by $\tau$. On the other hand, $J\otimes_\tau \Omega[C_2]$ is normal, since the top map in this pushout diagram is a normal monomorphism and $\Omega[C_2]$ is normal. Next, consider the pushout
$$\xymatrix{
J\otimes \dau \Omega[C_2]\ar[r]\ar[d] & J\otimes \Omega[C_2]\ar[r]^p & J\otimes_\tau \Omega[C_2]\ar[d]\\
\Delta[0] \ar[rr] & &  J\otimes_\tau \Omega[C_2]/J\otimes_\tau \dau\Omega[C_2]
}$$
Note that both $J\otimes \dau \Omega[C_2]$ and $\Delta[0]$ have no elements indexed by non-linear trees. Since pushouts of dendroidal sets are computed objectwise, this implies that the pushout $J\otimes_\tau \Omega[C_2]/J\otimes_\tau \dau \Omega[C_2]$ is again a normal dendroidal set.  Finally, let 
$$\xymatrix{
J\otimes_\tau \Omega[C_2]/J\otimes_\tau \dau \Omega[C_2]\ar@{>->}[r]^-\sim & X
}$$ 
be a fibrant-cofibrant replacement of this dendroidal set. The composite
$$\xymatrix{
x\colon \Omega[C_2]\ar@{>->}[r] & J\otimes_\tau \Omega[C_2] \ar[r] & J\otimes_\tau \Omega[C_2]/J\otimes_\tau \dau \Omega[C_2]\ar[r] & X
}$$
defines an element $x$ of $X$ with the property that $x$ is homotopic (relative to the boundary) to $\tau^*x$, while $x$ differs from $\tau^*x$ since $X$ was assumed normal. This property is shared by the image of $x$ under a retraction $r\colon X\rt M$. We conclude that any retraction of $X$ admits two distinct (but conjugate) 2-corollas which are homotopic relative to their boundary.
\end{example}
\begin{proposition}\label{prop:skeletalmodels}
Let $p\colon Y\rt X$ be a fibration of dendroidal sets with normal domain. Then $p$ admits a skeletal fibration $q\colon M\rt X$ as a fiberwise strong deformation retract (with respect to the functorial cylinder $J$).
\end{proposition}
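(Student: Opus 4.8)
The plan is to construct $M$ as a subobject of $Y$ by an inductive procedure along the skeletal filtration of $Y$, together with a fiberwise strong deformation retraction of $Y$ onto $M$, refining the classical simplicial argument so as to cope with the automorphisms in $\Omega$. We build an increasing sequence of subobjects $M^{(n)}\subseteq Y$ with $M^{(n)}\subseteq Y^{(n)}$, together with fiberwise strong deformation retractions $r_n\colon Y^{(n)}\cup M \rt M^{(n)}$ (relative to $M^{(n-1)}$), and set $M=\colim_n M^{(n)}$. At stage $n$ we work one $\mm{Aut}(T)$-orbit of generating nondegenerate elements at a time. Given a generating nondegenerate element $x_\alpha\colon\Omega[T_\alpha]\rt Y$ of degree $n$, its boundary $\dau x_\alpha$ already lands in the previously constructed $M^{(n-1)}$ after applying $r_{n-1}$; using Lemma \ref{lem:Jhomotopyisanequivalencerelation}, the relation ``fiberwise homotopic rel boundary, up to the action of $\mm{Aut}(T_\alpha)$'' is an equivalence relation on the relevant set of extensions of $\dau x_\alpha$, and we choose one representative $m_\alpha$ in each class, declaring the nondegenerate elements of $M^{(n)}$ over $T_\alpha$ to be the $\mm{Aut}(T_\alpha)$-orbit of $m_\alpha$. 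Because $Y$ is normal this orbit is free, so this prescription is consistent; by Lemma \ref{lem:degeneraciesdeterminedbyboundary} the degenerate elements are then forced, and one checks that the resulting $M^{(n)}$ is a well-defined sub-dendroidal-set of $Y$ containing $M^{(n-1)}$.

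Next I would produce the retraction at stage $n$. For each generating $x_\alpha$ of degree $n$ not already in $M$, the chosen representative $m_\alpha$ is fiberwise homotopic to some $\phi_\alpha^* x_\alpha$ rel boundary; applying $\phi_\alpha^{-1}$ we may as well assume $x_\alpha$ itself is fiberwise homotopic rel $\dau\Omega[T_\alpha]$ to $m_\alpha\in M^{(n)}$. The homotopy extension and lifting property for the (trivial) cofibration $J\otimes\dau\Omega[T_\alpha]\cup\{0\}\otimes\Omega[T_\alpha]\rt J\otimes\Omega[T_\alpha]$ against the fibration $p$ — exactly the kind of lifting used in Lemma \ref{lem:Jhomotopyisanequivalencerelation} — lets us simultaneously extend these homotopies and the already-constructed retraction $r_{n-1}$ over the cells attached in degree $n$, getting a fiberwise homotopy $H_n$ from $\mm{id}$ on $Y^{(n)}\cup M$ to a map $r_n$ into $M^{(n)}$ which is the identity on $M^{(n)}$ and is compatible with $H_{n-1}$. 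Here one must be careful that the choices made on one element $x_\alpha$ of an orbit are transported $\mm{Aut}(T_\alpha)$-equivariantly to the rest of the orbit, which is possible precisely because the action on the non-$M$ part is free; and one must reconcile the retractions over different cells along their common boundary, which works because $M^{(n)}$ was built to contain the images of the boundaries. Passing to the colimit over $n$ gives a fiberwise strong deformation retraction of $Y=\colim Y^{(n)}$ onto $M=\colim M^{(n)}$, and since $M\rt X$ is a retract of $p$ it is again an operadic fibration.

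Finally I would verify that $q\colon M\rt X$ is skeletal. Suppose $y_0,y_1\colon\Omega[T]\rt M$ are fiberwise homotopic rel boundary. First reduce to the case that both $y_i$ are nondegenerate: if they are degenerate, their boundaries agree (being homotopic rel boundary), so by Lemma \ref{lem:degeneraciesdeterminedbyboundary} they are equal and there is nothing to prove; if one is degenerate and the other not, a degree count as in Lemma \ref{lem:degeneraciesdeterminedbyboundary} rules this out, and if both are nondegenerate proceed as follows. Write $y_i=\psi_i^* x_{\alpha_i}$ for generating nondegenerate elements $x_{\alpha_i}$ and automorphisms $\psi_i$; the homotopy rel boundary forces $\dau y_0=\dau y_1$, hence (after applying $r_{n-1}$ on the boundary, which is already the identity there) the $x_{\alpha_i}$ restrict on their boundaries to the same element of $M^{(n-1)}$, so the homotopy $y_0\simeq y_1$ transports to a homotopy rel boundary between $\psi_0^{-1\,*}$-translates showing $x_{\alpha_0}$ and $x_{\alpha_1}$ lie in the same equivalence class; but each class contains exactly one chosen representative, whose orbit constitutes the nondegenerate elements of $M^{(n)}$ over $T$, so $x_{\alpha_0}=x_{\alpha_1}$ and $y_0=\phi^* y_1$ for $\phi=\psi_0\psi_1^{-1}\in\mm{Aut}(T)$. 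The main obstacle, and the place where the argument genuinely departs from the simplicial case, is the bookkeeping in the inductive step: one must choose representatives and retracting homotopies one orbit at a time and then spread them $\mm{Aut}(T)$-equivariantly, using normality of $Y$ to guarantee freeness of the action and hence consistency, while simultaneously keeping the homotopies compatible along shared boundaries and with the lower-skeleton data.
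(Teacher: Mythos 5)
Your overall architecture matches the paper's: induct over the skeletal filtration, choose one representative per ``homotopic rel boundary up to $\mm{Aut}(T)$''-class of nondegenerate elements with boundary in the previous stage, and extend the deformation retraction cell by cell. However, there is one genuine gap. Your construction of $M^{(n)}$ selects a representative in \emph{every} equivalence class of extensions of a given boundary, with no provision for the classes that contain a degenerate element, and your verification of skeletality then asserts that the mixed case (a nondegenerate element fiberwise homotopic rel boundary to a degenerate one) ``is ruled out by a degree count as in Lemma \ref{lem:degeneraciesdeterminedbyboundary}''. That lemma only compares two \emph{degenerate} elements with equal boundaries; it says nothing about a nondegenerate element being homotopic rel boundary to a degenerate one, and this situation certainly occurs (already for simplicial sets: a nondegenerate loop representing the trivial class in $\pi_1$ is homotopic rel boundary to the degenerate $1$-simplex). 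If such a class is assigned a nondegenerate representative $m_\alpha$, then $M$ also contains the degenerate element of that class (degeneracies of $M^{(n-1)}$ are forced into $M^{(n)}$), and these two elements are homotopic rel boundary but cannot differ by an automorphism of $T$, so $M$ fails to be skeletal. The paper handles this by imposing an explicit second condition on the elements eligible to be attached to $M^{(n)}$: besides having boundary in $M^{(n)}$, they must \emph{not} be fiberwise homotopic rel boundary to a degenerate element; elements violating this are instead retracted onto the (forced, and by Lemma \ref{lem:degeneraciesdeterminedbyboundary} unique) degenerate element of their class. You need to add this exclusion to your definition of the nondegenerate part of $M^{(n)}$ and delete the ``degree count'' claim; with that change the rest of your argument goes through.

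A smaller point of imprecision: you say the boundary of a generating element ``already lands in $M^{(n-1)}$ after applying $r_{n-1}$''. Since $M^{(n)}$ must be a genuine subobject of $Y$, only elements whose boundary \emph{literally} lies in $M^{(n-1)}$ can contribute new cells; the generating elements whose boundary does not are handled separately, by first lifting the homotopy $H^{(n-1)}\circ\dau y$ through the fibration to move $y$ to an element with boundary in $M^{(n-1)}$ and then concatenating with the homotopy already built for that element. Your appeal to the homotopy extension and lifting property gestures at this, but the two-step composition of homotopies (as in the proof of Lemma \ref{lem:Jhomotopyisanequivalencerelation}) should be made explicit.
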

\begin{proof}
We construct the inclusion $i\colon M\subseteq Y$, the retraction $r\colon Y\rt M$ and the strong deformation retraction $H\colon J\otimes Y\rt Y$ all at the same time, by induction along the skeleta of $Y$. Suppose that we have formed
$$\xymatrix{
M^{(n)} \ar@<1ex>@{^{(}->}[r]^{i^{(n)}} & Y^{(n)}\ar@<1ex>[l]^{r^{(n)}} & \text{and} & J\otimes Y^{(n)}\ar[r]^-{H^{(n)}} & Y
}$$
where $H_0^{(n)}$ is the inclusion of $Y^{(n)}$ into $Y$ and $H_1^{(n)}$ is the composite $i^{(n)}\circ r^{(n)}$. All maps are maps over the base $X$, where $M^{(n)}$ is considered as the domain of the map $p i^{(n)}\colon M^{(n)}\rt Y \rt X$. 

We start by producing $M^{(n+1)}$ and the inclusion $i^{(n+1)}\colon M^{(n+1)}\rt Y^{(n+1)}$. Recall that $Y^{(n+1)}$ is obtained from $Y^{(n)}$ by attaching a set of generating nondegenerate elements (together with their conjugates under the $\mm{Aut}(T)$-action). For each $T\in \Omega$ of degree $n+1$, let $N_T\subseteq Y(T)$ be the set of generating nondegenerate elements $y\colon \Omega[T]\rt Y$ such that
\begin{itemize}
 \item[(a)] the boundary $\dau y\colon \dau\Omega[T]\rt Y$ takes values in the subobject $M^{(n)}$.
 \item[(b)] $y$ is not fiberwise homotopic (relative to the boundary) to a degenerate element of $Y$.
\end{itemize}
We say that two elements $y_0$ and $y_1$ of $N_T$ are \emph{equivalent} if $y_0$ is fiberwise homotopic (relative the boundary) to $\phi^*y_1$, for some $\phi\in\text{Aut}(T)$. This defines an equivalence relation by Lemma \ref{lem:Jhomotopyisanequivalencerelation}.

We now construct $M^{(n+1)}$ and $i^{(n+1)}\colon M^{(n+1)} \rt Y^{(n+1)}$ by attaching one copy of $\Omega[T]$ to $M^{(n)}$ for every equivalence class of elements in the set $N_T\subseteq Y(T)$ and mapping it to a representative of that class in $Y(T)$. This defines $M^{(n+1)}$ together with an inclusion into $Y^{(n+1)}$. Furthermore, the resulting map out of the pushout
\begin{equation}\label{eq:inclusionofM}\xymatrix{
M^{(n+1)}\cup_{M^{(n)}} Y^{(n)} \ar[r] & Y^{(n+1)}
}\end{equation}
can be obtained as an iterated pushout of boundary inclusions $\dau\Omega[T]\rt \Omega[T]$. Indeed, we obtain this inclusion by attaching those generating nondegenerate elements of $Y$ which do not satisfy one of the above two conditions (a) or (b), as well as those generating nondegenerate elements in $N_T$ which are not yet contained in $M^{(n+1)}$.

Having constructed the inclusion $M^{(n+1)}\rt Y^{(n+1)}$, our next task is to extend the deformation retraction $H^{(n)}$. The constant homotopy on $M^{(n+1)}$ and the homotopy $H^{(n)}$ on $Y^{(n)}$ (relative to $M^{(n)}$) together define a homotopy
$$\xymatrix{
J\otimes \big(M^{(n+1)}\cup_{M^{(n)}} Y^{(n)}\big) \ar[r] & Y.
}$$
We extend this homotopy along each of the cell attachments that assemble into inclusion \eqref{eq:inclusionofM}.

\emph{Case 1:} we attach a generating nondegenerate element $y\colon \Omega[T]\rt Y$ which satisfies (a) and (b). By construction, this element is fiberwise homotopic (relative to the boundary) to an element which is contained in $M^{(n+1)}$. The extension of $H^{(n)}$ to the element $y$ is given by a choice of such a fiberwise homotopy (relative boundary).

\emph{Case 2:} we attach a generating nondegenerate element $y\colon \Omega[T]\rt Y$ which satisfies (a) but not (b). Then $y$ is fiberwise homopic (relative boundary) to a degenerate element, which is again contained in $M^{(n+1)}$. The extension of $H^{(n)}$ to the element $y$ is then given by a choice of such fiberwise homotopy (relative boundary).

\emph{Case 3:} we attach a generating nondegenerate element $y\colon \Omega[T]\rt Y$ which does not satisfy (a). Then we can form the commuting diagram
$$\xymatrix{
J\otimes \dau \Omega[T] \cup \{0\}\otimes \Omega[T]\ar[rr]^-{(H^{(n)}\Small\circ\dau y, y)}\ar[d] & & Y\ar[d]\\
J\otimes \Omega[T]\ar[r] & \Omega[T]\ar[r] & X
}$$
Since $Y\rt X$ is a fibration, there is a lift $h\colon J\otimes \Omega[T]\rt Y$ which provides a fiberwise homotopy between $y$ and an element $z\colon \Omega[T]\rt Y$ whose boundary is contained in $M^{(n)}$. By construction, this fiberwise homotopy extends the deformation retraction $H^{(n)}$ applied to the boundary of $y$. 

When $z$ is an element of $M^{(n+1)}$ we use the homotopy $h$ to extend $H^{(n)}$ to the element $y$. When $z$ is not contained in $M^{(n+1)}$, we have already constructed a homotopy $k$ (relative to the boundary) between $z$ and an element $m$ in $M^{(n+1)}$ in the previous two steps. We can compose the two homotopies $h$ and $k$ (as in the proof of Lemma \ref{lem:Jhomotopyisanequivalencerelation}) to produce a homotopy between $y$ and $m$ which agrees with $H^{(n)}$ on the boundary. Use this homotopy to extend $H^{(n)}$ over the element $y$.

In this way we produce a fiberwise strong deformation retraction
$$\xymatrix{
H^{(n+1)}\colon J\otimes Y^{(n+1)}\ar[r] & Y
}$$
extending $H^{(n)}$. By construction, the restriction of $H^{(n+1)}$ to $\{1\}\otimes Y^{(n+1)}$ factors as $i^{(n+1)}\circ r^{(n+1)}$ for some retraction $r^{(n+1)}\colon Y^{(n+1)}\rt M^{(n+1)}$. Proceeding by induction on the skeleton, we thus obtain a fiberwise strong deformation retract of $Y$ onto some subobject $M$.

It remains to check that the resulting map $q=pi\colon M\rt X$ is a skeletal fibration. Since $M$ is a fiberwise retract of $Y$, it follows that the map $q\colon M\rt X$ is a fibration. To see that it is skeletal, let $x, y\colon \Omega[T]\rt M$ be two elements of $M$ that are fiberwise homotopic relative to their boundary. If both maps are degenerate, then they must be the same since their boundaries are the same (Lemma \ref{lem:degeneraciesdeterminedbyboundary}). We may therefore assume that $x$ is nondegenerate.

Applying the inclusion $i$, we see that $x$ and $y$ determine homotopic elements (relative boundary) in $Y$, whose boundary lies in the $n$-skeleton $M^{(n)}$. By the construction of $M^{(n+1)}$ it follows that $y$ is nondegenerate as well: indeed, we removed all nondegenerate elements from $Y$ that were fiberwise homotopic (relative boundary) to degenerate elements in $M$. But then the construction of $M$ implies that $x=\phi^*y$ for some $\phi\in\mm{Aut}(T)$, since we attached only one generating nondegenerate element to $M^{(n)}$ for each equivalence class of nondegenerate elements in $Y$.
\end{proof}

\subsection{Minimal fibrations}
Let $p\colon Y\rt X$ be a skeletal fibration with normal domain. To check that $p$ is a minimal fibration, it suffices to check that any fiberwise self-homotopy equivalence of $p$ is an isomorphism. In turn, this is guaranteed by the following
\begin{proposition}\label{prop:skeletalfibrationsareminimal}
Let $p\colon Y\rt X$ be a skeletal fibration with normal domain. If $f\colon Y\rt Y$ is an endomorphism of $p$ which is fiberwise homotopic to the identity map on $Y$, then $f$ is an isomorphism.
\end{proposition}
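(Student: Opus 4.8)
The plan is to follow the classical induction over the skeletal filtration, the only genuinely new ingredient being the bookkeeping of automorphisms. Since $p$ is an operadic fibration, there is a fiberwise homotopy $H\colon J\otimes Y\rt Y$ over $X$ with $H_0=\mm{id}_Y$ and $H_1=f$. Every map of dendroidal sets preserves the skeletal filtration, so $f$ restricts to maps $f^{(n)}\colon Y^{(n)}\rt Y^{(n)}$ with $f=\colim_n f^{(n)}$; it therefore suffices to prove, by induction on $n$, that each $f^{(n)}$ is an isomorphism, the case $n=-1$ being vacuous. Assuming $f^{(k)}$ is an isomorphism for all $k<n$ — which forces $f$ to carry every nondegenerate element indexed by a tree of degree $<n$ to a nondegenerate element — the essentially unique decomposition of elements as degeneracies of nondegenerate ones, together with the compatibility of $f$ with degeneracy operators and with the $\mm{Aut}(T)$-actions, reduces the inductive step to showing that $f$ restricts to a \emph{bijection between the sets of nondegenerate elements $\Omega[T]\rt Y$}, for each tree $T$ with $|T|=n$. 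Throughout I use the standard fact, proved exactly like the transitivity clause of Lemma~\ref{lem:Jhomotopyisanequivalencerelation} via the homotopy extension and lifting property, that two elements of $Y$ with equal boundary that are fiberwise homotopic by a homotopy whose restriction to the boundary is null-homotopic relative to its endpoints (e.g.\ a concatenation $G\ast\overline G$) are fiberwise homotopic \emph{relative to their boundary}; I will call this \emph{rectifying} the homotopy.

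Fix $T$ with $|T|=n$. For injectivity, let $x,y\colon\Omega[T]\rt Y$ be nondegenerate with $fx=fy$. Then $f(\dau x)=f(\dau y)$ inside $Y^{(n-1)}$, so $\dau x=\dau y$ by the inductive hypothesis; composing the homotopies $x\simeq fx$ and $fy\simeq y$ supplied by $H$ gives a fiberwise homotopy $x\simeq y$ whose boundary restriction is $\big(H\circ(J\otimes\dau x)\big)\ast\overline{\big(H\circ(J\otimes\dau x)\big)}$. Rectifying and applying skeletality yields $x=\phi^{\ast}y$ for some $\phi\in\mm{Aut}(T)$, and then $fx=\phi^{\ast}fx$ shows $\phi\in\mm{Aut}(fx)$; since $fx$ is nondegenerate (by the final step below) and $Y$ is normal, $\phi=\mm{id}$, i.e.\ $x=y$. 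For surjectivity, let $y\colon\Omega[T]\rt Y$ be nondegenerate, put $w:=(f^{(n-1)})^{-1}\!\circ\dau y\colon\dau\Omega[T]\rt Y$, and solve the lifting problem for $p$ against the trivial cofibration $J\otimes\dau\Omega[T]\cup\{1\}\otimes\Omega[T]\rt J\otimes\Omega[T]$ (trivial because $\dau\Omega[T]\rt\Omega[T]$ is a monomorphism of normal dendroidal sets) with top map $\big(H\circ(J\otimes w),\,y\big)$; the lift restricted to $\{0\}\otimes\Omega[T]$ is an element $z$ with $\dau z=w$. Then $fz$ and $y$ have the same boundary $\dau y$, and rectifying the composite homotopy $fz\simeq z\simeq y$ shows $fz=\phi^{\ast}y$ for some $\phi\in\mm{Aut}(T)$; hence $f\big((\phi^{-1})^{\ast}z\big)=y$, and $(\phi^{-1})^{\ast}z$ is nondegenerate since $y$ is.

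It remains to verify the point used above — that $f$ sends nondegenerate elements indexed by trees of degree $n$ to nondegenerate elements — and this is the step I expect to be the main obstacle, since it is precisely where the automorphisms in $\Omega$ and the hypothesis of normality must be combined. Suppose $x\colon\Omega[T]\rt Y$ is nondegenerate with $|T|=n$ but $fx=\sigma^{\ast}\bar w$ for a non-invertible degeneracy $\sigma\colon T\rt W$ and a nondegenerate $\bar w$. Choose a section $\alpha\colon W\rt T$ of $\sigma$; it is a face map, and since $|W|<n$ the element $\alpha^{\ast}x$ lies in $Y^{(n-1)}$, with $f(\alpha^{\ast}x)=\alpha^{\ast}fx=\bar w$. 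Set $e:=\sigma^{\ast}\alpha^{\ast}x$, a \emph{degenerate} element of $Y(T)$ with $fe=\sigma^{\ast}f(\alpha^{\ast}x)=\sigma^{\ast}\bar w=fx$. By the inductive hypothesis $f(\dau e)=f(\dau x)$ forces $\dau e=\dau x$, so composing the $H$-homotopies $x\simeq fx=fe\simeq e$ gives a fiberwise homotopy $x\simeq e$ whose boundary restriction is again a concatenation $G\ast\overline G$; rectifying, $x$ is fiberwise homotopic \emph{relative to $\dau\Omega[T]$} to the degenerate element $e$. But then skeletality would give $x=\psi^{\ast}e$ for some $\psi\in\mm{Aut}(T)$, contradicting the nondegeneracy of $x$ because $\psi^{\ast}e$ is degenerate. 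This contradiction completes the inductive step, and with it the proof that $f$ is an isomorphism. The difficulty, compared with the simplicial case, is exactly this control of $f$ on degeneracies: skeletality only pins homotopic elements down up to an automorphism, so the induction has to be set up so that — via normality — the relevant isotropy groups vanish precisely when a conclusion on the nose is needed.
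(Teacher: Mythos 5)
Your proof is correct and follows essentially the same strategy as the paper: induction over the skeletal filtration, with injectivity obtained by rectifying the composite homotopy $x\simeq fx=fy\simeq y$ into one relative to the boundary and then combining skeletality with normality of $Y$ (via the nondegeneracy of $fx$) to kill the automorphism, and with your separate ``$f$ preserves nondegeneracy'' step matching the paper's treatment of the case where $h_0(x)$ is degenerate. The only notable variation is in the surjectivity step, where you lift directly against $p$ along the anodyne map $J\otimes\dau\Omega[T]\cup\{1\}\otimes\Omega[T]\rt J\otimes\Omega[T]$ using the given homotopy, whereas the paper factors $f$ as a cofibration followed by a trivial fibration and lifts against the latter; both routes yield $fz=\phi^*y$ and the rest of the argument is identical.
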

We prove this by induction on the skeleton of $Y$, the case $Y^{(-1)}$ being trivial. The inductive step follows from the following two lemmas:
\begin{lemma}
Let $p\colon Y\rt X$ be a skeletal fibration with normal domain $Y$ and let $h\colon J\otimes Y\rt Y$ be a fiberwise homotopy from an endomorphism $h_0$ of $p$ to the identity map. If $h_0$ induces an isomorphism on the $n$-skeleton $Y^{(n)}$, then $h_0$ is injective on the $(n+1)$-skeleton of $Y$.
\end{lemma}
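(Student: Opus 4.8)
The plan is to prove that $h_0$ is injective on $Y^{(n+1)}(S)$ for every tree $S$, by induction on $|S|$. The heart of the argument is a single observation: \emph{for any $x,y\colon\Omega[T]\rt Y$ with $|T|=n+1$ and $h_0x=h_0y$, one has $x=y$.} Throughout I will use that by hypothesis $h_0|_{Y^{(n)}}$ is bijective, that $h_0\left(Y^{(n)}\right)\subseteq Y^{(n)}$, and that restricting the homotopy $h$ along an element $w\colon\Omega[S]\rt Y$ yields a fiberwise homotopy $h\circ(J\otimes w)\colon h_0w\simeq w$ whose restriction to a subobject $A\subseteq\Omega[S]$ equals $h\circ(J\otimes w|_A)$, and hence depends only on $w|_A$.

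To prove the observation: since $\dau\Omega[T]$ is $n$-skeletal, $\dau x$ and $\dau y$ factor through $Y^{(n)}$, and from $h_0\circ\dau x=\dau(h_0x)=\dau(h_0y)=h_0\circ\dau y$ and injectivity of $h_0$ on $Y^{(n)}$ we get $\dau x=\dau y$. The fiberwise homotopies $h\circ(J\otimes x)\colon h_0x\simeq x$ and $h\circ(J\otimes y)\colon h_0y\simeq y$ then have the same source and the same restriction $h\circ(J\otimes\dau x)$ to $J\otimes\dau\Omega[T]$. Concatenating the first one reversed with the second and straightening the result — the loop it traces on $\dau\Omega[T]$ is of the form $\overline{\gamma}\ast\gamma$ with $\gamma=h\circ(J\otimes\dau x)$ (the reverse of $\gamma$ followed by $\gamma$), hence nullhomotopic relative to its endpoints, so one finishes by a lifting argument against $p$ exactly as in the proofs of Lemma~\ref{lem:Jhomotopyisanequivalencerelation} and Proposition~\ref{prop:skeletalmodels} — produces a fiberwise homotopy $x\simeq y$ \emph{relative to} $\dau\Omega[T]$. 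Skeletality of $p$ now furnishes $\phi\in\mm{Aut}(T)$ with $x=\phi^*y$, whence $\phi^*(h_0y)=h_0(\phi^*y)=h_0x=h_0y$, i.e. $\phi\in\mm{Aut}(h_0y)$. But $Y$ is normal, so $\mm{Aut}(T)$ acts freely on all of $Y(T)$ and $\mm{Aut}(h_0y)$ is trivial; therefore $\phi=\mm{id}$ and $x=y$.

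The induction on $|S|$ is then short. For $|S|\le n$ we have $Y^{(n+1)}(S)=Y^{(n)}(S)$ and injectivity is the hypothesis. For $|S|=n+1$ we have $Y^{(n+1)}(S)=Y(S)$, and the observation (with $T=S$) gives injectivity directly. For $|S|>n+1$ every element of $Y^{(n+1)}(S)$ is degenerate, its nondegenerate core having degree $\le n+1<|S|$; if $x,y\in Y^{(n+1)}(S)$ satisfy $h_0x=h_0y$, then for every non-invertible face $S'\rt S$ the restrictions $x|_{S'},y|_{S'}\in Y^{(n+1)}(S')$ have the same $h_0$-image and $|S'|<|S|$, so $x|_{S'}=y|_{S'}$ by the inductive hypothesis; hence $\dau x=\dau y$, and by Lemma~\ref{lem:degeneraciesdeterminedbyboundary} (using normality of $Y$ and degeneracy of $x,y$) we conclude $x=y$.

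The step I expect to be the main obstacle is the production of the homotopy $x\simeq y$ \emph{relative to} $\dau\Omega[T]$: skeletality only recognizes homotopies rel the boundary, whereas the homotopy one obtains for free from $h$ drags $\dau x$ along $h$, so the concatenated homotopy must first be straightened by a homotopy-lifting argument against $p$. This is routine but slightly fiddly, and is entirely parallel to the transitivity argument in the proof of Lemma~\ref{lem:Jhomotopyisanequivalencerelation}. The only genuinely new ingredient compared with the simplicial case is that skeletality yields $x=\phi^*y$ rather than $x=y$; this extra automorphism $\phi$ is forced to be the identity because it fixes the element $h_0y$, which has trivial automorphism group thanks to normality of $Y$.
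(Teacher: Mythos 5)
Your proof is correct and follows the paper's strategy: use the homotopy lifting property of $p$ to upgrade the two homotopies $h\circ(J\otimes x)$ and $h\circ(J\otimes y)$, which agree at their source and on $J\otimes\dau\Omega[T]$, to a fiberwise homotopy $x\simeq y$ relative to $\dau\Omega[T]$ (the paper does this in one square-filling step rather than your concatenate-then-straighten, but these are the same argument), then apply skeletality to get $x=\phi^*y$ and kill $\phi$ using normality of $Y$. The only divergence is in that last step: you invoke the fact that $\mm{Aut}(T)$ acts freely on \emph{all} elements of a normal dendroidal set (the paper's remark following the definition of normality, so this is legitimate), whereas the paper uses only freeness on nondegenerate elements and disposes of the case where $h_0(x)$ is degenerate by producing a degenerate $z$ with $h_0z=h_0x$ and contradicting nondegeneracy of $x$; your explicit handling of trees of degree $>n+1$ via Lemma \ref{lem:degeneraciesdeterminedbyboundary} also fills in a step the paper leaves implicit.
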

\begin{proof}
Let $T\in\Omega$ be of degree $n+1$ and let $x, y\colon \Omega[T]\rt Y$ be two elements such that $h_0x=h_0y$. We have that $px=py$ and by inductive hypothesis $\dau x=\dau y$. We may clearly assume that one of the two, say $x$, is a nondegenerate element.

Take the (fiberwise) homotopies $h(x)$ and $h(y)$ from $h_0x$ to $x$ and from $h_0y$ to $y$, together with the constant homotopies on $\dau x=\dau y$ and $h_0(x)=h_0(y)$. Together these give a map $K$ which fits into a commuting square
$$\xymatrix{
J\otimes \big(J\otimes\dau \Omega[T]\cup \{0, 1\}\otimes \Omega[T]\big) \cup \{0\}\otimes \big(J\otimes \Omega[T]\big)\ar[d]\ar[rr]^-K & & Y\ar[d]\\
J\otimes \big(J\otimes \Omega[T]\big)\ar[r] \ar@{..>}[rru]_-L & \Omega[T]\ar[r] & X.
}$$
This square allows for a diagonal map $L$ because $Y\rt X$ is a fibration. The composite
$$\xymatrix{
\{1\}\otimes \big(J\otimes \Omega[T]\big) \ar[r] & J\otimes(J\otimes \Omega[T])\ar[r]^-L & Y
}$$
determines a fiberwise homotopy between $x$ and $y$, relative to $\dau \Omega[T]$. The fibration $p$ is skeletal, so $x=\sigma^*y$ for some $\sigma\in\mm{Aut}(T)$. But then we also have that $h_0(x)=\sigma^*h_0(y)= \sigma^*h_0(x)$. Since $Y$ is normal, this either means that $\sigma=1$ (if $h_0(x)$ is nondegenerate) or $h_0(x)$ is degenerate. 

In the latter case, there is a \emph{degenerate} $z$ such that $h_0(x)=h_0(z)$, since $h_0$ was assumed to be an isomorphism on $Y^{(n)}$. Repeating the previous argument shows that $x=\tau^*z$ for some $\tau\in \mm{Aut}(T)$. Since $x$ was assumed nondegenerate, this cannot happen and we conclude again that $\sigma=1$. This shows that $x=y$.
\end{proof}
\begin{lemma}
Let $f\colon Y\rt Y$ be a fiberwise homotopy equivalence from a skeletal fibration $p\colon Y\rt X$ with normal domain to itself. If $f$ induces an isomorphism on $Y^{(n)}$, then $f$ induces a surjective map on elements of degree $n+1$.
\end{lemma}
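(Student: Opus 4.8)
The plan is to adapt the classical simplicial argument: produce a candidate preimage of $y$ by transporting $g(y)$ along a homotopy, correct it on the boundary -- where we have full control, since $f$ restricts to an isomorphism of the $n$-skeleton -- and then appeal to skeletality of $p$ to repair the interior up to an automorphism of $T$.

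Fix a fiberwise homotopy inverse $g\colon Y\rt Y$ of $f$ together with fiberwise homotopies $h\colon fg\simeq\mm{id}_Y$ and $\ol h\colon gf\simeq\mm{id}_Y$. Let $T$ be a tree of degree $n+1$ and $y\colon\Omega[T]\rt Y$ an element. If $y$ is degenerate, say $y=\sigma^*\ol y$ with $\sigma$ a non-invertible degeneracy and $\ol y$ nondegenerate of degree $\leq n$, then $\ol y$ lies in $Y^{(n)}$, hence $\ol y=f(\ol x)$ for the unique $\ol x\in Y^{(n)}$ with this property and $y=\sigma^*f(\ol x)=f(\sigma^*\ol x)$; we may therefore assume that $y$ is nondegenerate. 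Set $z:=g(y)$ and let $k:=h\circ(J\otimes y)\colon J\otimes\Omega[T]\rt Y$, a fiberwise homotopy from $f(z)=fg(y)$ to $y$. Since $\dau\Omega[T]=\Omega[T]^{(n)}$ is $n$-skeletal, the boundaries $\dau z=g(\dau y)$ and $\dau y$ factor through $Y^{(n)}$, and because $f$ restricts to an isomorphism $f^{(n)}\colon Y^{(n)}\rt Y^{(n)}$ we may form $b:=(f^{(n)})^{-1}(\dau y)$, so that $f(b)=\dau y$ and $g(f(b))=g(\dau y)=\dau z$. The key step is to produce an element $z'\colon\Omega[T]\rt Y$ with $\dau z'=b$ together with a fiberwise homotopy from $f(z')$ to $y$ that is \emph{relative} to $\dau\Omega[T]$.

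For this, note that $m:=\ol h\circ(J\otimes b)$ is a fiberwise homotopy from $\dau z=g(f(b))$ to $b$. Extending $m$ along the trivial cofibration $J\otimes\dau\Omega[T]\cup_{\{0\}\otimes\dau\Omega[T]}\{0\}\otimes\Omega[T]\rt J\otimes\Omega[T]$ -- using the homotopy lifting property of $p$ and the value $z$ at the endpoint $0$ -- produces an element $z'$ with $\dau z'=b$, together with a fiberwise homotopy $\mu\colon z\simeq z'$ restricting to $m$ over $\dau\Omega[T]$. Then $\ol{f(\mu)}$ followed by $k$ is a fiberwise homotopy from $f(z')$ to $y$; over $\dau\Omega[T]$ it restricts to the concatenation of $\ol{f(m)}$ and $k|_{\dau\Omega[T]}$, a loop at $\dau y$. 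Now $f(m)=(f\ol h)\circ(J\otimes b)$ and $k|_{\dau\Omega[T]}=h\circ(J\otimes f(b))$ are obtained by evaluating at $b$ the two fiberwise homotopies $f\ol h$ and $h\circ(J\otimes f)$, both going from $fgf$ to $f$. After the standard replacement of $h$ which makes one triangle identity hold up to fiberwise homotopy relative to $\{0,1\}\otimes Y$ (so that $f\ol h\simeq h\circ(J\otimes f)$ rel $\{0,1\}\otimes Y$), it follows that $f(m)$ and $k|_{\dau\Omega[T]}$ are fiberwise homotopic relative to their endpoints, so the boundary loop is null-homotopic relative to its endpoints. A box-filling argument as in the proof of the preceding lemma -- a lift against $p$ along a trivial cofibration of the form $J\otimes\big(J\otimes\dau\Omega[T]\cup\{0,1\}\otimes\Omega[T]\big)\cup\{0\}\otimes\big(J\otimes\Omega[T]\big)\rt J\otimes\big(J\otimes\Omega[T]\big)$ -- then upgrades the homotopy from $f(z')$ to $y$ to one that is relative to $\dau\Omega[T]$.

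With $z'$ in hand, skeletality of $p$ applied to the fiberwise homotopy relative to the boundary between $f(z')$ and $y$ gives an automorphism $\phi\in\mm{Aut}(T)$ with $y=\phi^*f(z')=f(\phi^*z')$, so $x:=\phi^*z'$ is a preimage of $y$ under $f$. Since $T$ was an arbitrary tree of degree $n+1$, this shows that $f$ is surjective on elements of degree $n+1$. I expect the construction of $z'$ in the third paragraph to be the main obstacle: as $f$ is not a fibration one cannot lift $y$, nor the homotopy $fg(y)\simeq y$, through $f$ directly, and the delicate point is to arrange the boundary-correcting homotopy $m$ so that, after applying $f$, it agrees with $k$ over $\dau\Omega[T]$ up to homotopy relative to the endpoints; without this compatibility the resulting homotopy from $f(z')$ to $y$ need not be relative to the boundary and skeletality of $p$ cannot be invoked.
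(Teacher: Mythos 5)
Your argument is correct in outline but takes a genuinely different, and considerably heavier, route than the paper's. The paper factors $f=qi$ with $i\colon Y\rt Z$ a trivial cofibration and $q\colon Z\rt Y$ a trivial fibration; normality of $Y$ makes $i$ the inclusion of a fiberwise strong deformation retract with retraction $r$, and the lifting property of the trivial fibration $q$ against $\dau\Omega[T]\rt\Omega[T]$ then produces in one stroke an element $z$ of $Z$ with $qz=x$ and prescribed boundary $iy$; the required rel-boundary homotopy between $f(rz)=qir(z)$ and $q(z)=x$ comes for free because the deformation retraction is constant on $i(Y)$, which contains $\dau z$. This sidesteps everything you have to work for: choosing a homotopy inverse $g$, correcting the boundary of $g(y)$ by hand, and --- the crux, which you correctly identify --- arranging that the boundary loop $\ol{f(m)}$ followed by $k|_{\dau\Omega[T]}$ is null-homotopic rel endpoints. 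That last point is where your proof leans on an unproven ingredient: the replacement of the homotopy-equivalence data so that one triangle identity $f\ol h\simeq h\circ(J\otimes f)$ holds rel $\{0,1\}\otimes Y$. This is a standard fact (the usual improvement of a homotopy equivalence between fibrant--cofibrant objects of $\dSet/X$ to a half-adjoint one), and you are right that without it the homotopy from $f(z')$ to $y$ need not be rel boundary; but in this fiberwise $J$-homotopy setting it requires its own lifting argument of roughly the same length as the rest of your proof, so you should either supply it or cite it precisely. The remaining steps (the extension of $m$ along the anodyne map to produce $z'$, the box-filling, and the final appeal to skeletality together with naturality $\phi^*f(z')=f(\phi^*z')$) are sound. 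In short: both proofs work; the paper's factorization trick buys the prescribed-boundary lift and the rel-boundary homotopy simultaneously, while your route is conceptually self-contained but outsources the coherence of the chosen homotopies.
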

\begin{proof}
Let $f\colon Y\rt Y$ be a fiberwise homotopy equivalence from $p$ to itself. Factor $f=qi$ where $i\colon Y\rt Z$ is a cofibration and $q\colon Z\rt Y$ is a trivial fibration. Since $Y$ is normal, so is $Z$ and $i$ is the inclusion of a fiberwise strong deformation retract over $X$, with retraction $r\colon Z\rt Y$ over $X$.

Let $T\in\Omega$ be of degree $n+1$ and take $x\colon \Omega[T]\rt Y$. Because $f$ induces an isomorphism on the $n$-skeleton of $Y$, there is a map $y\colon \dau \Omega[T]\rt Y$ such that $fy= \dau x$. Since $q$ is a trivial fibration, there is a map $z\colon \Omega[T]\rt Z$ such that $qz=x$ and $\dau z = iy$:
$$\xymatrix{
\dau \Omega[T]\ar[d]\ar[r]^y & Y\ar[r]^i & Z\ar[d]^q\\
\Omega[T]\ar[rr]_x\ar@{..>}[rru]|-z & & Y.
}$$
Let $w=r(z)$. Then $\dau f(w) = fr(\dau z) = fri(y) = \dau x$, and $f(w)$ is fiberwise homotopic to $x$ (rel $\dau \Omega[T]$):
$$\xymatrix{
f(w) = fr(z) = qir(z) \simeq_{\text{rel } \dau \Omega[T]} q(z) = x
}$$
Since $p$ was skeletal it follows that $f(w)=\sigma^*x$ for some $\sigma\in\mm{Aut}(T)$, so that $x=f(\sigma^{-1 *}w)$.
\end{proof}
\begin{proof}[Proof (of Theorem \ref{thm:existenceofminimalmodels})]
Let $p\colon Y\rt X$ be an operadic fibration with normal domain. By Proposition \ref{prop:skeletalmodels} $p$ admits a skeletal fibration $q\colon M\rt X$ as a fiberwise strong deformation retract, with inclusion $i\colon M\rt Y$ and retraction $r\colon Y\rt M$. The object $M$ is normal, being the retract of a normal object. It then follows from Proposition \ref{prop:skeletalfibrationsareminimal} that $q$ is a minimal fibration.

It remains to check that the retraction $r\colon Y\rt M$ is a trivial fibration when the base $X$ is a normal dendroidal set. This is proven exactly as in Quillen's paper \cite{qui68}, which treats the analogous result for simplicial sets. Consider a diagram of the form
\begin{equation}\label{diag:trivfib}\vcenter{\xymatrix{
\dau \Omega[T]\ar[d]\ar[r]^{y} & Y\ar[d]^r\\
\Omega[T]\ar[r]_x & M
}}\end{equation}
Then $ix$ provides a lift making the bottom triangle commute, but the boundary of $ix$ agrees with $ir y\colon \dau\Omega[T]\rt Y$, which is only fiberwise homotopic to $y$ using the the deformation retraction $H$ between $ir$ and the identity on $Y$.

We therefore replace $ix$ by a homotopic element of $Y$ whose boundary agrees with $y$. Since $p\colon Y\rt X$ is a fibration, there is a lift in the diagram
$$\xymatrix{
\{0\}\otimes \Omega[T]\cup J\otimes\dau \Omega[T]\ar[r]^-{(ix, H)}\ar[d] & Y\ar[d]\\
J\otimes \Omega[T]\ar[r]\ar@{..>}[ru]_K & X.
}$$
Let $z\colon \Omega[T]\rt Y$ be the restriction of the lift $K$ to $\{1\}\otimes \Omega[T]$. We claim that $z\colon \Omega[T]\rt Y$ provides a lift in diagram \eqref{diag:trivfib}. 

Indeed, $\dau z = y$ and the deformation retraction $H$ gives a homotopy from $ir(z)$ to $z$. This means that $i(x)$ and $ir(z)$ are fiberwise homotopic to $z$, both via a homotopy which is given by $H$ when restricted to the boundary $\dau\Omega[T]$. But then there is a fiberwise homotopy between $ir(z)$ and $ix$ which is constant on the boundary (using an argument similar to the proof of Lemma \ref{lem:Jhomotopyisanequivalencerelation}). It follows that $r(z)$ is homotopic (relative boundary) to $x$. Because $q\colon M\rt X$ was a skeletal fibration, we conclude that $x=\phi^*r(z)$ for some automorphism $\phi$ of $T$. 

Applying $q$, we see that $q(x) = \phi^*qr(z) = \phi^*q(x)$. But $X$ is a normal dendroidal set, so $\phi$ must be the identity automorphism. We conclude that $x=r(z)$, which means that $z\colon \Omega[T]\rt Y$ provides a diagonal lift in diagram \eqref{diag:trivfib}.
\end{proof}
\begin{corollary}\label{cor:minimalisskeletal}
Let $p\colon Y\rt X$ be a fibration with normal domain. Then $p$ is a minimal fibration iff $p$ is a skeletal fibration.
\end{corollary}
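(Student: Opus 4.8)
The plan is to prove the two implications of the biconditional separately; both are short consequences of results already established, and there is no circularity, since neither Theorem~\ref{thm:existenceofminimalmodels} nor Proposition~\ref{prop:skeletalfibrationsareminimal} refers back to this corollary.

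For ``$p$ skeletal $\Rightarrow$ $p$ minimal'' I would argue exactly as in the final step of the proof of Theorem~\ref{thm:existenceofminimalmodels}. By the remark preceding Proposition~\ref{prop:skeletalfibrationsareminimal} it suffices to show that every fiberwise self-homotopy equivalence $f\colon Y\rt Y$ of $p$ is an isomorphism. Choosing a fiberwise homotopy inverse $g$, the composites $gf$ and $fg$ are fiberwise homotopic to $\mm{id}_Y$ and hence isomorphisms by Proposition~\ref{prop:skeletalfibrationsareminimal}; therefore $f$ is simultaneously a split monomorphism and a split epimorphism of presheaves, and so an isomorphism.

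For the converse ``$p$ minimal $\Rightarrow$ $p$ skeletal'' I would invoke Theorem~\ref{thm:existenceofminimalmodels}(a). Since $Y$ is normal, $p$ admits a skeletal fibration $q\colon M\rt X$ as a fiberwise strong deformation retract, with inclusion $i\colon M\hookrightarrow Y$ and retraction $r\colon Y\rt M$ over $X$ for which the composite $ir\colon Y\rt Y$ is fiberwise homotopic to $\mm{id}_Y$ (via the cylinder $J\otimes Y$, which is a cylinder object because $Y$ is normal). In particular $ir$ is a weak equivalence over $X$ from $Y$ to itself, so minimality of $p$ forces $ir$ to be an isomorphism. Hence $i$ is a split epimorphism as well as a monomorphism, and therefore an isomorphism; thus $p$ is isomorphic over $X$ to the skeletal fibration $q$. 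Since skeletality is manifestly invariant under isomorphism over $X$ — transport both a fiberwise homotopy rel boundary and the resulting automorphism of the indexing tree across the isomorphism — it follows that $p$ is skeletal.

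I do not expect a genuine obstacle here; the two points that deserve a sentence are that a weak equivalence over $X$ between the fibrations at hand is automatically a fiberwise homotopy equivalence (which is the substance of the remark before Proposition~\ref{prop:skeletalfibrationsareminimal}, equivalently of the fact that such a $Y$ is fibrant and cofibrant in $\cat{dSet}/X$), and that a self-map of a presheaf that is at once a split monomorphism and a split epimorphism is an isomorphism.
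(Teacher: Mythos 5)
Your proof is correct and follows essentially the same route as the paper: the forward implication is the already-established reduction to Proposition~\ref{prop:skeletalfibrationsareminimal} via a fiberwise homotopy inverse, and the converse applies the skeletal deformation retract $i\colon M\hookrightarrow Y$ from Proposition~\ref{prop:skeletalmodels}, uses minimality to force $ir$ (hence $i$) to be an isomorphism, and transports skeletality across $i$. The only difference is that you spell out the "split mono plus split epi" bookkeeping that the paper leaves implicit.
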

\begin{proof}
All skeletal fibrations are minimal fibrations, so assume that $p$ is a minimal fibration. Then there is a trivial cofibration $i\colon M\rt Y$ such that $pi$ is a minimal fibration. By minimality of $p$, the map $i$ is an isomorphism and one finds that $p$ is skeletal.
\end{proof}
\begin{remark}\label{rem:varyingcylinder}
In particular, the notion of skeletal fibration from Definition \ref{def:skeletalfibration} is independent of the chosen cylinder, as long as it preserves colimits and has the properties mentioned in Section \ref{sec:cylinders}.
\end{remark}
\begin{corollary}\label{cor:pullbacksofminimalfibrations}
Let $f\colon X'\rt X$ be a map of dendroidal sets with the property that for any element $x\colon \Omega[T]\rt X'$, the map $\mm{Aut}(x)\rt \mm{Aut}(fx)$ is bijective. If $Y\rt X$ is a minimal fibration with normal domain, then the base change $f^*Y\rt X'$ is a minimal fibration as well.
\end{corollary}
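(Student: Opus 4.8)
The plan is to deduce this from the equivalence ``minimal $\Leftrightarrow$ skeletal'' of Corollary~\ref{cor:minimalisskeletal}, together with the elementary fact that skeletality behaves much better under base change than minimality does.

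The first thing I would check is that $f^*Y\rt X'$ has normal domain and is a fibration, so that Corollary~\ref{cor:minimalisskeletal} applies to it. It is a fibration because operadic fibrations are stable under pullback. For normality I would argue that any element $z\colon\Omega[T]\rt f^*Y$ has trivial automorphism group: its image $\bar z$ under the projection $f^*Y\rt Y$ lies in the normal dendroidal set $Y$, on which $\mm{Aut}(T)$ acts freely on \emph{all} of $Y(T)$ (the Remark recalling Proposition~1.5 of~\cite{cis09}), so $\mm{Aut}(\bar z)=1$; since $\mm{Aut}(z)$ injects into $\mm{Aut}(\bar z)$, it is trivial as well, and in particular $\mm{Aut}(T)$ acts freely on the nondegenerate elements of $f^*Y(T)$. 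By Corollary~\ref{cor:minimalisskeletal} it now suffices to show $f^*Y\rt X'$ is a \emph{skeletal} fibration, and by the same corollary we may use that $Y\rt X$ is skeletal.

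For skeletality I would argue directly from Definition~\ref{def:skeletalfibration}. Take two elements $w_0,w_1\colon\Omega[T]\rt f^*Y$, written as pairs $w_i=(x_i,s_i)$ with $x_i\colon\Omega[T]\rt X'$ and $s_i\colon\Omega[T]\rt Y$, and suppose they are fiberwise homotopic over $X'$ relative to their common boundary. Since a fiberwise homotopy over $X'$ projects to $X'$ through a map that factors through $\Omega[T]$, one gets $x_0=x_1=:x$; composing the homotopy with the projection $q\colon f^*Y\rt Y$ yields a fiberwise homotopy over $X$ between $s_0$ and $s_1$, again relative to their boundary $q(\dau w_0)=q(\dau w_1)$. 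As $Y\rt X$ is skeletal, there is $\phi\in\mm{Aut}(T)$ with $s_0=\phi^*s_1$. Now both $s_0$ and $s_1$ lie over $fx\in X(T)$, being the $Y$-components of elements of $f^*Y$ over $x$; together with $s_0=\phi^*s_1$ this forces $fx=\phi^*(fx)$, so $\phi\in\mm{Aut}(fx)$. Here the hypothesis on $f$ enters: the canonical injection $\mm{Aut}(x)\rt\mm{Aut}(fx)$ is the inclusion of stabilizer subgroups of $\mm{Aut}(T)$, so being bijective it is an equality; hence $\phi\in\mm{Aut}(x)$, i.e. $\phi^*x=x$. Then $\phi^*w_1=(\phi^*x,\phi^*s_1)=(x,s_0)=w_0$, which is exactly the condition in Definition~\ref{def:skeletalfibration}. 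Thus $f^*Y\rt X'$ is skeletal, hence minimal.

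The step I expect to be the crux is this last one: skeletality of $Y$ only produces \emph{some} automorphism $\phi$ with $s_0=\phi^*s_1$, and a priori there is no reason such a $\phi$ should fix the $X'$-component $x$ --- indeed, without a hypothesis on $f$ the pullback of a minimal fibration need not be minimal (Remark~\ref{rem:minimalfibrationsarenotpullbackstable}). Bijectivity of $\mm{Aut}(x)\rt\mm{Aut}(fx)$ is precisely what rules this out, by forcing any automorphism which matches the two components downstairs to already be an automorphism of the element upstairs. The remaining points require only bookkeeping: that a homotopy over $X'$ genuinely fixes the first coordinate of a pair, and that applying $q$ preserves the ``relative to the boundary'' condition --- both are formal consequences of the factorization $J\otimes\Omega[T]\rt\Omega[T]\rt X'$.
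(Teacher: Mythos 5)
Your proof is correct and follows essentially the same route as the paper: reduce minimality to skeletality via Corollary~\ref{cor:minimalisskeletal}, push the fiberwise homotopy forward along $f^*Y\rt Y$ to produce $\phi\in\mm{Aut}(T)$ with $s_0=\phi^*s_1$, and use the bijectivity of $\mm{Aut}(x)\rt\mm{Aut}(fx)$ to see that $\phi$ also fixes the $X'$-component, whence $w_0=\phi^*w_1$ in the pullback. The only difference is that you additionally spell out why $f^*Y$ is normal and a fibration, which the paper leaves implicit.
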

\begin{proof}
This follows immediately from the corresponding property for skeletal fibrations: indeed, let $p\colon Y\rt X$ be a skeletal fibration (with normal domain) and consider the pullback square
$$\xymatrix{
f^*Y\ar[r]^{f'}\ar[d]_{p'} & Y\ar[d]^p\\
X'\ar[r]_f & X.
}$$
If $x, y\colon \Omega[T]\rt f^*Y$ are fiberwise homotopic, then $f'x$ and $f'y$ are fiberwise homotopic as well. It follows that there is an element $\phi\in\mm{Aut}(T)$ such that $f'x=\phi^*f'y$. Projecting to $X$, we find that $\phi$ is an automorphism of the element $pf'x=fp'x\colon \Omega[T]\rt X$. By the assumption that $f$ induces a bijection on automorphism groups, it follows that $\phi^*p'y=\phi^*p'x=p'x$. Since $Y'$ is the pullback of $Y$ and $X'$ over $X$, this implies that $x=\phi^*y$. We conclude that $Y'\rt X'$ is indeed skeletal.
\end{proof}
\begin{example}
The condition of Corollary \ref{cor:pullbacksofminimalfibrations} is satisfied by monomorphisms and by all maps between normal dendroidal sets (whose elements all have trivial automorphism groups). Furthermore, it is satisfied by all maps whose domain is a simplicial set, i.~e.~ a dendroidal set without elements indexed by nonlinear trees. In particular, if $p\colon X\rt S$ is a minimal fibration, then the fiber $X_c$ of $p$ over a colour $c\colon \Delta[0]\rt S$ is a minimal $\infty$-category.
\end{example}
\begin{remark}\label{rem:minimalfibrationsarenotpullbackstable}
Minimal (or skeletal) fibrations are not stable under base change along an arbitrary map. For example, consider the normal $\infty$-operad $X$ constructed in example \ref{ex:dendroidalsetswhicharenotstrictlyskeletal} and let $M$ be a skeletal deformation retract of it. The dendroidal set $M$ comes equipped with a 2-corolla $x\colon \Omega[C_2]\rt M$ which is homotopic (relative to its boundary) to $\tau^*x$, where $\tau$ is the nontrivial automorphism of $C_2$. 

Now let $p\colon E_\infty\rt {*}$ be a trivial fibration with normal domain. Then the map $M\rt *$ is a skeletal fibration, but the base change $M\times E_\infty\rt E_\infty$ is not. Indeed, let $y\colon \Omega[C_2]\rt E_\infty$ be a lift of the unique map $\Omega[C_2]\rt *$. Then the element $(x, y)\colon \Omega[C_2]\rt M\times E_\infty$ is fiberwise homotopic (rel.~ boundary) to $(\tau^*x, y)$, but it is not related to $(\tau^*x, y)$ via an automorphism of $C_2$.
\end{remark}

\section{Applications}\label{sec:applications}
By way of example, we give two applications to the theory of left fibrations between dendroidal sets.

\subsection{Gluing left fibrations}
Let $X$ be a simplicial set and let $A$ and $B$ be two subobjects of $X$ which cover $X$. The class of Kan fibrations satisfies a certain `homotopy descent' condition, which asserts that Kan fibrations over $A$ and $B$ can be glued - up to homotopy - to yield a fibration over their union $X$. More precisely, consider two Kan fibrations $Y_A\rt A$ and $Y_B\rt B$ and a homotopy equivalence between their restrictions to the intersection $A\cap B$. Then there exists a Kan fibration $Y\rt X$ whose restrictions to $A$ and $B$ are homotopy equivalent to the original two fibrations. 

This homotopy descent property reflects the fact that Kan fibrations are local in nature: a map $Y\rt X$ is a Kan fibration whenever its restriction to each simplex of $X$ is a Kan fibration. One may therefore expect a similar gluing result to hold for fibrations between dendroidal sets which have the same locality property. Operadic fibrations do not have this property, but left fibrations do since they are defined by the right lifting property with respect to subobjects of representables \cite{heu12}.

With this in mind, the homotopy descent property for left fibrations of dendroidal sets follows by a straightforward reduction to the situation where all left fibrations are minimal.
\begin{proposition}\label{prop:gluingleftfibrations}
Consider a diagram of dendroidal sets
\begin{equation}\label{diag:pushoutdiagramofleftfibrations}\vcenter{\xymatrix@R=1.8pc{
Y_1\ar@{->>}[d] & Y_0\ar[l]\ar[r]\ar@{->>}[d] & Y_2\ar@{->>}[d]\\
X_1 & X_0\ar@{>->}[l]\ar@{>->}[r] & X_2
}}\end{equation}
in which the vertical maps are left fibrations and the bottom horizontal maps are cofibrations. Suppose that both squares are `homotopy cartesian', in the sense that the maps $Y_0\rt Y_i\times_{X_i} X_0$ are weak equivalences in the covariant model structure over $X_0$. Then there exists a left fibration over the pushout $X_1\cup_{X_0} X_2$, whose pullback to each of the $X_i$ is weakly equivalent to the left fibration $Y_i\rt X_i$ in the covariant model structure over $X_i$.
\end{proposition}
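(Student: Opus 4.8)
The plan is to reduce to the situation in which all of the left fibrations $Y_i\to X_i$ are \emph{minimal}, for then the homotopy cartesian hypothesis can be upgraded to an honest statement about pullbacks and the gluing can be carried out on the nose. \emph{Reduction to normal bases.} First I would replace the span $X_1\leftarrow X_0\to X_2$ by a span of normal dendroidal sets connected to it by levelwise weak equivalences: take a cofibrant replacement $\tilde X_0\xrightarrow{\sim}X_0$ and factor each composite $\tilde X_0\to X_0\to X_i$ as a cofibration $\tilde X_0\to\tilde X_i$ followed by a trivial fibration $\tilde X_i\xrightarrow{\sim}X_i$; each $\tilde X_i$ is then normal, since $\tilde X_0$ is and $\tilde X_0\to\tilde X_i$ is a normal monomorphism. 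Pulling the $Y_i$ back along $\tilde X_i\to X_i$ yields left fibrations $\tilde Y_i\to\tilde X_i$ with normal domain (any dendroidal set over a normal one is normal, by the Remark above), and the two squares stay homotopy cartesian because pullback along a weak equivalence of bases is a right Quillen functor between covariant model structures, hence preserves weak equivalences between left fibrations. Since $\tilde X_1\cup_{\tilde X_0}\tilde X_2\to X_1\cup_{X_0}X_2$ is a weak equivalence (gluing lemma, using left properness of the operadic model structure), the induced functor $f_!$ is a Quillen equivalence of covariant model structures by \cite{heu12}, Prop.~2.4, so a left fibration with the required restrictions over the former transports to one over the latter. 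I therefore assume $X_0,X_1,X_2$ normal and suppress the tildes.

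\emph{Passing to minimal models.} By Theorem \ref{thm:existenceofminimalmodels}, applied to left fibrations as explained after its statement, replace each $Y_i\to X_i$ by a minimal left fibration $M_i\to X_i$ which is a fiberwise (strong deformation) retract of it; the $M_i$ are normal, being retracts of normal objects. Since $X_0\to X_i$ is a monomorphism it induces bijections on automorphism groups, so by Corollary \ref{cor:pullbacksofminimalfibrations} (whose proof uses only the skeletality of the total space and so applies verbatim to left fibrations) the base change $M_i\times_{X_i}X_0\to X_0$ is again a minimal left fibration, with normal domain. The hypotheses now give, over $X_0$, a chain of weak equivalences $M_0\simeq Y_0\simeq Y_i\times_{X_i}X_0\simeq M_i\times_{X_i}X_0$ between fibrant--cofibrant objects of the covariant model category over $X_0$ (again because pullback along $X_0\to X_i$ is right Quillen). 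Such a chain is realised by a genuine covariant weak equivalence $g_i\colon M_0\to M_i\times_{X_i}X_0$ over $X_0$, together with a weak equivalence $g_i'$ in the opposite direction. Then $g_i'g_i$ is a weak equivalence from the minimal fibration $M_0\to X_0$ to itself and hence an isomorphism, and likewise $g_ig_i'$ is an isomorphism of $M_i\times_{X_i}X_0$; therefore $g_i$ is itself an isomorphism $M_0\cong M_i\times_{X_i}X_0$ over $X_0$. \textbf{This rigidification step is the crux of the argument, and I expect it to be the main obstacle}: it is exactly here that minimality (and its stability under base change along monomorphisms) is indispensable, whereas everything around it is formal.

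\emph{Gluing.} Using the isomorphisms $g_1,g_2$, regard $M_0$ as a common subobject of $M_1$ and $M_2$ (via the monomorphisms $M_i\times_{X_i}X_0\hookrightarrow M_i$) and form the pushout $M:=M_1\cup_{M_0}M_2$; the compatible maps $M_i\to X_i\to X:=X_1\cup_{X_0}X_2$ induce a map $M\to X$. Because colimits in the presheaf category $\dSet$ are computed pointwise and $X_0\to X_i$ is a monomorphism, every element $\Omega[T]\to X$ factors through $X_1$ or through $X_2$, and a pointwise computation gives $M\times_X X_i\cong M_i$ over $X_i$, as well as $M\times_X X_0\cong M_1\times_{X_1}X_0$. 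Consequently, for every $t\colon\Omega[T]\to X$ the pullback $t^*M\to\Omega[T]$ is a base change of one of the left fibrations $M_i\to X_i$ and so is a left fibration; since left fibrations are characterised by the right lifting property against a set of monomorphisms with representable codomain, and any such lifting problem over $X$ is equivalent to the corresponding lifting problem over the representable through which the bottom map factors, it follows that $M\to X$ is itself a left fibration. Its restriction along $X_i\to X$ is $M_i$, which is weakly equivalent to $Y_i$ in the covariant model structure over $X_i$, and its restriction along $X_0\to X$ is $M_1\times_{X_1}X_0\cong M_0\simeq Y_0$. Undoing the preliminary reduction via the Quillen equivalence of \cite{heu12}, Prop.~2.4, produces the left fibration over $X_1\cup_{X_0}X_2$ required by the statement.
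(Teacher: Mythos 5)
Your proof is correct and follows essentially the same route as the paper: replace the $Y_i$ by minimal left fibrations, use that a covariant weak equivalence between minimal fibrations over $X_0$ is an isomorphism to make both squares strictly cartesian, and then glue using the locality of left fibrations. The only real difference is your preliminary normalization of the bases, which is harmless but unnecessary --- the paper only needs the total spaces to be normal (minimality and Corollary \ref{cor:pullbacksofminimalfibrations} require nothing of the base), and skipping it spares you the somewhat delicate transport back along the Quillen equivalence at the end.
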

\begin{proof}
We can replace the above diagram by any weakly equivalent diagram of left fibrations over the $X_i$. In particular, we can assume that all dendroidal sets $Y_i$ are cofibrant.

We can further reduce to the case where all vertical maps are minimal left fibrations. Indeed, we can first replace Diagram \eqref{diag:pushoutdiagramofleftfibrations} by a diagram of the form
$$\xymatrix@R=1.9pc{
& M_0\ar@{..>}[d]^j\ar[ld]\ar[rd] & \\
Y_1\ar@{->>}[d] & Y_0\ar@{..>}[l]\ar@{..>}[r]\ar@{->>}[d] & Y_2\ar@{->>}[d]\\
X_1 & X_0\ar[l]\ar[r] & X_2
}$$
where $j$ is the inclusion of a minimal fibration with cofibrant domain. The resulting diagram of left fibrations remains homotopy cartesian. Next, replace this diagram by a diagram of the form
\begin{equation}\label{diag:finaldiagramofminimalleftfibrations}\vcenter{\xymatrix@R=1.9pc{
Y_1\ar@{..>}[d]_{r_1} & M_0\ar@{..>}[l]\ar@{..>}[r] \ar[ld]\ar[rd]\ar@{->>}[dd] & Y_2\ar@{..>}[d]^{r_2}\\
M_1\ar@{->>}[d] & & M_2\ar@{->>}[d]\\
X_1 & X_0\ar[l]\ar[r] & X_2
}}\end{equation}
where $r_1$ and $r_2$ are fiberwise retractions onto minimal fibrations (with cofibrant domains). The vertical maps in the resulting diagram remain left fibrations and the maps $M_0\rt M_i\times_{X_i} X_0$ are given by the composition
$$\xymatrix@C=1.5pc{
M_0\ar[r] & Y_0\ar[r] & Y_i\times_{X_i} X_0 \ar[r] & M_i\times_{X_i} X_0.
}$$
The composition of the first two maps is an operadic weak equivalence and the second map is the base change of a fiberwise deformation retract over $X_i$. It follows that the composite is a weak equivalence between two minimal fibrations over $X_0$, which means that it must be an isomorphism.  In other words, the two solid squares in Diagram \eqref{diag:finaldiagramofminimalleftfibrations} are both pullback squares.

Taking the pushout of the top and bottom row gives a map
$$\xymatrix{
p\colon M_1\cup_{M_0} M_2\ar[r] & X_1\cup_{X_0} X_2.
}$$
Because both squares in Diagram \eqref{diag:finaldiagramofminimalleftfibrations} are cartesian, the pullback of this map to each of the $X_i$ reproduces the fibration $M_i\rt X_i$, up to a canonical isomorphism. Since left fibrations between dendroidal sets are local, it follows that the map $p$ is a left fibration over $X_1\cup_{X_0} X_2$ whose pullback to each of the $X_i$ is weakly equivalent to the original left fibration $Y_i\rt X_i$.
\end{proof}
Proposition \ref{prop:gluingleftfibrations} has a simple model-categorical consequence, which we will now explain. The covariant model structures over all dendroidal sets assemble into a functor
$$\entrymodifiers={+!!<0pt,\fontdimen22\textfont2>}\xymatrix{
\text{Alg}\colon \dSet^{\text{op}}\ar[r] & \cat{ModelCat}^{\text{R}}; \hspace{4pt} X \ar@{|->}[r] & \big(\cat{dSet}/X\big)^{\text{cov}}
}$$
Given a cospan of dendroidal sets
$$\xymatrix{
X_1 & X_0\ar[l]_f\ar[r]^{g} & X_2
}$$
we thus obtain a span of (combinatorial, left proper) model categories and right Quillen functors between them
$$\xymatrix{
\mm{Alg}(X_1)\ar[r]^-{f^*} & \mm{Alg}(X_0) & \mm{Alg}(X_2)\ar[l]_-{g^*}
}$$
Any such diagram of right Quillen functors admits a `homotopy pullback' model category $\mm{Alg}(X_1)\times^h_{\mm{Alg}(X_0)} \mm{Alg}(X_2)$, whose underlying category is the lax pullback of the above diagram of categories \cite{bar12}. More precisely, the homotopy limit model category has objects given by triples of objects $Y_i\in \mm{Alg}(X_i)$ together with two structure maps in $\mm{Alg}(X_0)$
$$\xymatrix{
\alpha\colon Y_0\ar[r] &  f^*Y_1 & \beta\colon Y_0\ar[r] & g^*Y_2.
}$$
The maps are maps of triples $Y_i\rt Z_i$ that are compatible with the two structure maps. This category carries a model structure in which the trivial fibrations are triples of trivial fibrations $Y_i\rt Z_i$, while the fibrant objects are given by triples of fibrant objects $Y_i$, together with structure maps $\alpha$ and $\beta$ which are \emph{weak equivalences}.

In the present situation, where each of the categories $\mm{Alg}(X_i)$ is just the category of dendroidal sets over $X_i$, this means that the category underlying the homotopy pullback $\mm{Alg}(X_1)\times^h_{\mm{Alg}(X_0)} \mm{Alg}(X_2)$ is simply the overcategory
$$
\left(\cat{dSet}^{1\leftarrow 0\rightarrow 2}\right)/X
$$
whose objects are diagrams of shape \eqref{diag:pushoutdiagramofleftfibrations}. The model structure described above agrees with the model structure for which
\begin{itemize}
 \item cofibrations are projective cofibrations between the underlying diagrams of dendroidal sets.
 \item fibrant objects natural transformations $Y\rt X$ such that each $Y_i\rt X_i$ is a left fibration and each map $Y_0\rt Y_i\times_{X_i} X_0$ is a covariant weak equivalence over $X_0$.
 \item weak equivalences between fibrant objects are degreewise weak equivalences.
\end{itemize}
This homotopy pullback model category comes equipped with a Quillen pair
$$\xymatrix{
\colim\colon \left(\dSet^{1\leftarrow 0 \rt 2}\right)/X \ar@<1ex>[r] & \dSet/\colim X \ar@<1ex>[l]
}$$
to the covariant model structure over the pushout of the diagram $X$. The right adjoint sends a map over $\colim X$ to its pullbacks to each of the $X_i$. Proposition \ref{prop:gluingleftfibrations} now has the following reformulation:
\begin{corollary}
When $X_1\leftarrow X_0\rt X_2$ is a diagram of cofibrations between dendroidal sets, the above Quillen pair is a Quillen equivalence.
\end{corollary}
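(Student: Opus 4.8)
The plan is to deduce the corollary from Proposition~\ref{prop:gluingleftfibrations} by means of the usual recognition criterion for Quillen equivalences. Write $F=\colim$ and $G=\mm{pullback}$ for the adjoint pair in question. It suffices to show that the right derived functor $RG$ is essentially surjective on homotopy categories and that the derived counit $LF\circ RG\Rightarrow\mm{id}$ is a weak equivalence on fibrant objects; indeed, the second condition says exactly that $RG$ is fully faithful, and a fully faithful essentially surjective right adjoint is an equivalence of categories, so that $(F,G)$ is a Quillen equivalence.

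Essential surjectivity of $RG$ is precisely the content of Proposition~\ref{prop:gluingleftfibrations}. By the description recalled above, a fibrant object of the homotopy pullback model category over a diagram $X=(X_1\leftarrow X_0\rightarrow X_2)$ of cofibrations is a homotopy cartesian square of left fibrations $(Y_i\rt X_i)$, and Proposition~\ref{prop:gluingleftfibrations} produces a left fibration over $\colim X$ --- i.e.\ a fibrant object of $\dSet/\colim X$ --- whose image under $G$ is degreewise weakly equivalent to $(Y_i\rt X_i)$, hence isomorphic to it in the homotopy category of the homotopy pullback model structure.

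For the derived counit it is enough, by naturality, to evaluate at a fibrant--cofibrant object, that is, at a normal left fibration $W\rt\colim X$. I would first check that $GW$, namely the span $W|_{X_1}\leftarrow W|_{X_0}\rightarrow W|_{X_2}$ of pullbacks, is itself already fibrant--cofibrant in the homotopy pullback model category: it is fibrant because each $W|_{X_i}\rt X_i$ is a left fibration (left fibrations being stable under base change) and because its structure maps $W|_{X_0}\rt W|_{X_i}\times_{X_i}X_0$ are the canonical identifications of both sides with $W\times_{\colim X}X_0$, hence isomorphisms; and it is projectively cofibrant because normality of $W$ forces each $W|_{X_i}$ to be normal and each monomorphism $W|_{X_0}\rt W|_{X_i}$ to be a normal monomorphism. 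No cofibrant replacement is therefore needed, and the derived counit at $W$ is simply the canonical comparison map
$$
\colim GW\;=\;W|_{X_1}\cup_{W|_{X_0}}W|_{X_2}\;\longrightarrow\;W.
$$
This map is an isomorphism: in the presheaf topos $\dSet$ the pullback functor along $W\rt\colim X$ preserves colimits, so applying it to the pushout presentation $\colim X=X_1\cup_{X_0}X_2$ (whose legs are the monomorphisms $X_0\rt X_i$) exhibits $W$ itself as $W|_{X_1}\cup_{W|_{X_0}}W|_{X_2}$.

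The only genuinely substantive input is Proposition~\ref{prop:gluingleftfibrations}; everything else is bookkeeping in the two model structures. The point that deserves attention is the claim that $G$ carries fibrant--cofibrant objects of $\dSet/\colim X$ to fibrant--cofibrant objects of the homotopy pullback model category, since it is this that lets the derived counit be computed on the nose --- and it rests squarely on the hypothesis that the maps $X_i\rt\colim X$ are monomorphisms, so that pulling back preserves normality and colimits. If one prefers to avoid the recognition criterion, the same ingredients --- the minimal models furnished by Proposition~\ref{prop:gluingleftfibrations}, the van Kampen-type colimit formula for $W$ above, and left properness of the covariant model structure --- let one verify directly that for cofibrant $A$ and fibrant $B$ a map $FA\rt B$ is a covariant weak equivalence over $\colim X$ if and only if its adjoint $A\rt GB$ is a weak equivalence in the homotopy pullback model category.
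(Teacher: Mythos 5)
Your proof is correct and follows essentially the same route as the paper's one-line argument (Proposition \ref{prop:gluingleftfibrations} for the substantive half, plus an elementary verification of the remaining derived (co)unit condition): you package the substantive half as essential surjectivity of $RG$ rather than as a derived unit/counit statement, but given your computation that the derived counit is an isomorphism on fibrant--cofibrant objects these formulations are equivalent. The one point to keep in mind is that essential surjectivity requires the zig-zag of maps of \emph{triples} constructed in the proof of Proposition \ref{prop:gluingleftfibrations}, not merely the degreewise weak equivalences asserted in its statement --- which is exactly why the paper, too, appeals to the proof rather than the statement.
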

\begin{proof}
The derived unit is easily checked to be a natural weak equivalence and the proof of Proposition \ref{prop:gluingleftfibrations} shows that the derived counit map is a natural weak equivalence. 
\end{proof}
\begin{remark}
The same result holds when only one of the two arrows is a cofibration. Indeed, this follows from the fact that the operadic model structure is left proper \cite{cis09}, the covariant model structures over weakly equivalent dendroidal sets are Quillen equivalent \cite{heu12} and the fact that two (naturally) Quillen equivalent diagrams of model categories have Quillen equivalent homotopy pullbacks \cite{bar12}.
\end{remark}

\subsection{Weak equivalences between left fibrations}
As another application, we give an alternative, self-contained proof of the result from \cite{heu12} that the weak equivalences between left fibrations of dendroidal sets are precisely the fiberwise weak equivalences.
\begin{proposition}\label{prop:leftfibrationwithcontractiblefibers}
Let $p\colon X\rt S$ be a left fibration between normal dendroidal sets. Then $p$ is a trivial fibration iff for every colour $c\colon \eta\rt S$, the fiber $X_c = X\times_S \eta$ is a contractible Kan complex.
\end{proposition}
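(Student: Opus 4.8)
The plan is to prove the two implications separately. The forward one is a base change: if $p$ is a trivial fibration and $c\colon\eta\rt S$ is a colour, then $X_c\rt\eta$ is again a trivial fibration of dendroidal sets; since $\eta=\Delta[0]$ has no elements indexed by nonlinear trees, neither does $X_c$, so $X_c$ is a simplicial set, $X_c\rt\Delta[0]$ is a trivial Kan fibration, and $X_c$ is a contractible Kan complex. For the converse, assume every fibre $X_c$ is a contractible Kan complex; I would first reduce to the case of a minimal left fibration. Just as for operadic fibrations, $p$ admits a minimal left fibration $q\colon M\rt S$ as a fibrewise deformation retract with retraction $r\colon X\rt M$, and the argument of Theorem~\ref{thm:existenceofminimalmodels}(b), which uses that $S$ is normal, shows $r$ is a trivial fibration of dendroidal sets. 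Since trivial fibrations are closed under composition and $p=qr$, it suffices to show $q$ is a trivial fibration. Moreover each fibre $M_c$ is a retract of the contractible Kan complex $X_c$, hence itself contractible, while $M_c\rt\Delta[0]$ is a minimal left fibration by Corollary~\ref{cor:pullbacksofminimalfibrations}, applied to the map $\eta\rt S$ of normal dendroidal sets -- so a minimal Kan complex; and a contractible minimal Kan complex is $\Delta[0]$. Thus every fibre of $q$ over a colour is a single point, and I will in fact show $q$ is an isomorphism.

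A map of dendroidal sets is an isomorphism once its pullback along every element $x\colon\Omega[T]\rt S$ of the base is one, since then $x^*M\rt\Omega[T]$ has a unique section, i.e.\ $x$ has a unique lift to $M$, so $q$ is bijective on elements. It therefore suffices to show that each $N:=x^*M\rt\Omega[T]$ is an isomorphism. By Corollary~\ref{cor:pullbacksofminimalfibrations}, applied to $x\colon\Omega[T]\rt S$ (a map of normal dendroidal sets), $N\rt\Omega[T]$ is again a minimal left fibration whose fibre over each edge of $T$ is a point, and I would prove that such an $N$ is an isomorphism by induction on the degree $n=|T|$; the case $n=0$ is the statement about contractible minimal Kan complexes above. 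For $n>0$, the restriction of $N$ along each proper face $\Omega[V]\hookrightarrow\Omega[T]$ is a minimal left fibration of the same kind over a tree of lower degree, hence an isomorphism by the inductive hypothesis; since every element of $\dau\Omega[T]$ factors through a proper face, this forces $N|_{\dau\Omega[T]}\rt\dau\Omega[T]$ to be an isomorphism. Consequently each element of $N$ of degree $<n$ is the unique lift of its image in $\dau\Omega[T]$, so the $(n-1)$-skeleton of $N$ is the image of the unique section $s_0\colon\dau\Omega[T]\rt N$ and maps bijectively onto $\dau\Omega[T]$. It then remains to identify the elements of $N$ of degree $\geq n$. Here minimality enters: no nondegenerate element of $N$ can lie over a degenerate element of $\Omega[T]$, for a section of the underlying degeneracy would exhibit two lifts of a common element of $\Omega[T]$ agreeing on their boundary, one of them degenerate, and -- the relevant fibre being contractible -- these two lifts would then be fibrewise homotopic relative to their boundary, hence identified up to an automorphism by minimality, contradicting nondegeneracy. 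A left-anodyne horn filling $\Lambda^a[T]\hookrightarrow\Omega[T]$ at a non-root face, whose restriction to the missing face is automatically correct because $q$ is already bijective on the $(n-1)$-skeleton, then supplies the missing top-dimensional section, and assembling these observations yields $N\cong\Omega[T]$, completing the induction.

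The main obstacle is exactly this last step: converting the qualitative hypothesis that the fibres are contractible into the rigid conclusion that $N$ carries no spurious elements. Contractibility of the fibres is what lets two lifts with a common boundary be joined by a fibrewise homotopy, while minimality together with normality of $\Omega[T]$ is what promotes such a homotopy to an equality and thereby forbids extra nondegenerate elements; carrying this through uniformly in all degrees $\geq n$, checking the left-anodyne combinatorics of a suitable non-root horn for a general tree, and establishing the left-fibration analogues of the two lemmas following Proposition~\ref{prop:skeletalfibrationsareminimal} needed for the reduction to minimal fibrations, is where the real work of the proof is concentrated.
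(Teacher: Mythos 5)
Your overall strategy---reduce to a minimal left fibration, observe that a minimal fibration whose fibres over colours are contractible actually has fibres isomorphic to $\eta$, and then show that such a fibration is an isomorphism by induction over degrees---is the strategy of the paper, and the forward implication together with the various reductions (the appeal to Corollary \ref{cor:pullbacksofminimalfibrations}, the use of the argument of Theorem \ref{thm:existenceofminimalmodels}(b) for the retraction, the identification of a contractible minimal Kan complex with $\Delta[0]$) are fine. But there is a genuine gap at exactly the step you flag as the crux: you assert that, ``the relevant fibre being contractible,'' two lifts $\beta,\beta'$ of a common element which agree on their boundary are fibrewise homotopic relative to that boundary, and you offer no mechanism for producing such a homotopy. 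Contractibility of the fibres over \emph{colours} is a statement about simplicial sets sitting over single edges; it does not by itself yield a $J$-homotopy rel $\dau\Omega[T]$ between two tree-shaped elements. Essentially all of the work in the paper's proof is concentrated in supplying this.

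What the paper actually does is the following. Given two lifts $\beta,\beta'$ of an element $\alpha$ of degree $n$, it forms the tree $T\star\eta$ by adding a root vertex and fills the left horn $\Lambda^v[T\star\eta]$, for $v$ a leaf vertex of $T$, whose two ``vertical'' faces are $\beta$ and $\beta'$ and which is degenerate on $\Lambda^v[T]\star\Delta[0]$; this uses only that $p$ is a left fibration, not contractibility of fibres. The resulting homotopy $H$ is a priori constant only on the faces belonging to the horn, \emph{not} on the missing face $\dau^v$, so minimality cannot yet be applied. To show that $H$ is also constant on $\dau^v\Omega[T]$ the paper runs a second, \emph{decreasing} induction on the ``trunk height'' of $\alpha$: the restriction $\dau^vH$ is a lift of a degree-$n$ element with strictly larger trunk, and uniqueness of such lifts (the inner inductive hypothesis) forces it to be degenerate, hence constant. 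Triviality of the fibres enters only at the base case of that induction, namely for fully linear elements $\Delta[n]\rt S$---so you have also misplaced where the hypothesis on the fibres is used. Finally, one must still convert the resulting homotopy along the added root edge into a genuine $J$-homotopy rel boundary (via uniqueness of inner horn fillers up to fiberwise $J$-homotopy) before minimality and normality of $S$ can be invoked. None of this apparatus appears in your proposal, and without it your claim that minimality forbids spurious elements of $N$ over degeneracies, and that the top-dimensional lift is unique, does not go through.
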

\begin{corollary}
Consider a map of left fibrations over $S$
$$\xymatrix{
X\ar[rr]^f\ar[rd] & & Y\ar[ld]^q\\
& S &
}$$
Then $f$ is a weak equivalence iff for every colour $c\colon \eta\rt S$ the map between fibers $X_c\rt Y_c$ is a weak equivalence of Kan complexes.
\end{corollary}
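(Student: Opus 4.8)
The plan is to deduce the corollary from Proposition~\ref{prop:leftfibrationwithcontractiblefibers} by purely formal arguments, using only the basic properties of the covariant model structures together with the Quillen adjunction $g_{!}\dashv g^{*}$ attached to a map $g$ of dendroidal sets (\cite{heu12}, Proposition~2.4). Here a \emph{weak equivalence} of $f$ means a weak equivalence in the covariant model structure over $S$. First I would reduce to the case in which $X$ and $Y$ are normal, by replacing $p\colon X\to S$ and $q\colon Y\to S$ by cofibrant replacements in $(\dSet/S)^{\mathrm{cov}}$ and lifting $f$ through the resulting trivial fibrations: a cofibrant replacement $\widetilde{X}\to X$ is a trivial fibration of dendroidal sets, hence a left fibration with normal domain, and its restriction along any colour is a trivial Kan fibration; so this operation changes neither whether $f$ is a weak equivalence nor whether each $X_{c}\to Y_{c}$ is one.

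Assume now that $f$ is a weak equivalence. For a fixed colour $c\colon\eta\to S$, apply the right Quillen functor $c^{*}\colon(\dSet/S)^{\mathrm{cov}}\to(\dSet/\eta)^{\mathrm{cov}}$. Since $p$ and $q$ are left fibrations over $S$, they are fibrant, so Ken Brown's lemma shows that $c^{*}f$ is a weak equivalence in $(\dSet/\eta)^{\mathrm{cov}}$. But $c^{*}f$ is precisely the map $X_{c}\to Y_{c}$, and under the identification of $(\dSet/\eta)^{\mathrm{cov}}$ with the Kan--Quillen model structure on simplicial sets this is exactly a weak equivalence of Kan complexes.

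Conversely, assume each $X_{c}\to Y_{c}$ is a weak equivalence of Kan complexes. Factor $f$ over $S$ as a left anodyne map $i\colon X\to W$ followed by a left fibration $p'\colon W\to Y$. Then $W\to S$ is again a left fibration (it is $q\circ p'$), and $W$ is normal, since left anodyne maps are normal monomorphisms and a normal monomorphism with normal domain has normal codomain. As $i$ is a trivial cofibration in $(\dSet/S)^{\mathrm{cov}}$, the direction already proved gives that $X_{c}\to W_{c}$ is a weak equivalence for every $c$, and then $W_{c}\to Y_{c}$ is one by two-out-of-three. Now for any colour $d\colon\eta\to Y$, writing $c=qd$, the fibre $W\times_{Y}\eta$ of $p'$ over $d$ is canonically identified with the fibre of $W_{c}\to Y_{c}$ over $d$; but $W_{c}\to Y_{c}$ is a left fibration of simplicial sets over the Kan complex $Y_{c}$, hence a Kan fibration, and being a weak equivalence it is a trivial Kan fibration, so all its fibres are contractible Kan complexes. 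Thus every fibre of $p'$ over a colour of $Y$ is a contractible Kan complex, and Proposition~\ref{prop:leftfibrationwithcontractiblefibers}, applied to the left fibration $p'$ between the normal dendroidal sets $W$ and $Y$, shows that $p'$ is a trivial fibration of dendroidal sets, in particular a weak equivalence in $(\dSet/S)^{\mathrm{cov}}$. Since $i$ is a weak equivalence as well, so is $f=p'\circ i$.

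The only steps that require care are the supporting facts: that restriction along a colour is right Quillen (\cite{heu12}), that the covariant model structure on $\dSet/\eta$ is the usual homotopy theory of simplicial sets, that a left fibration of simplicial sets over a Kan complex is a Kan fibration (classical), and that left anodyne maps interact well with normality. The one essential ingredient is Proposition~\ref{prop:leftfibrationwithcontractiblefibers} itself, which is exactly what turns the fibrewise contractibility of $p'$ into the statement that $p'$ is a trivial fibration; everything else is formal.
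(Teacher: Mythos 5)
Your proof is correct and follows essentially the same route as the paper: the forward direction via Ken Brown's lemma applied to the right Quillen functor $c^*$, and the converse by factoring $f$ into a covariant trivial cofibration followed by a left fibration over $Y$, checking that this second factor has contractible fibers over the colours of $Y$, and invoking Proposition \ref{prop:leftfibrationwithcontractiblefibers}. The extra details you supply (the normalization step and the identification of the fibers of $p'$ with the fibers of $W_c\to Y_c$) are exactly the ones the paper leaves implicit.
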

\begin{proof}
By Brown's lemma, weak equivalences between left fibrations are preserved by the right Quillen functor taking the base change along $c\colon \eta\rt S$. A weak equivalence between left fibrations is therefore a fiberwise weak equivalence. For the converse, we can assume that $X$ and $Y$ are normal. Factor the fiberwise weak equivalence $f$ as a covariant trivial cofibration $X\rt \tilde{X}$, followed by a covariant fibration $\tilde{X}\rt Y$. Because $\tilde{X}\rt S$ is a left fibration, the trivial cofibration $X\rt \tilde{X}$ is a fiberwise weak equivalence. This implies that the covariant fibration $\tilde{X}\rt Y$ is a fiberwise weak equivalence as well.

For every colour $c\colon \eta\rt Y$, the fiber $\tilde{X}\times_Y \{c\}$ is isomorphic to the fiber over $\{c\}$ of the map between simplicial sets
$$\xymatrix{
\tilde{X}\times_S \{qc\} \ar[r] & Y\times_S \{qc\}.
}$$
This map is a trivial fibration between Kan complexes because $\tilde{X}\rt Y$ is a left fibration and a fiberwise weak equivalence. The left fibration $\tilde{X}\rt Y$ has contractible fibers, so it follows from Proposition \ref{prop:leftfibrationwithcontractiblefibers} that it is a trivial fibration. 
\end{proof}
\begin{proof}[Proof (of Proposition \ref{prop:leftfibrationwithcontractiblefibers})]
Let $i\colon M\rt X$ be the inclusion of a minimal fibration into $X$. The fibers of $pi\colon M\rt S$ are weakly equivalent to those of $p\colon X\rt S$ and the composite $pi$ is a trivial fibration iff the original fibration $p$ is a trivial fibration. We can therefore reduce to the case where $p\colon X\rt S$ is a minimal fibration with normal codomain. Note that a minimal fibration with contractible fibers actually has trivial fibers, i.~e.~ its fibers are isomorphic to $\eta$. 

We will prove that any minimal left fibration $p\colon X\rt S$ with trivial fibers is an isomorphism. It is immediate that $p$ induces a bijection on colours and a left fibration inducing a surjection on colours is always an epimorphism. We show by induction that $p$ induces a monomorphism on all $n$-skeleta. 

Assume that $p\colon X\rt S$ induces an isomorphism on $(n-1)$-skeleta and let $\alpha\colon \Omega[T]\rt S$ be a (possibly degenerate) element of degree $n$. We have to show that $\alpha$ has a unique lift to $X$ (in particular, this unique lift is degenerate if $\alpha$ was). The proof of this uses another inductive argument: we will say that an element $\alpha\colon \Omega[T]\rt S$ \emph{has a trunk of height} $0\leq k\leq n+1$ if there is a factorization
$$\xymatrix{
\Omega[T]\ar[r]^-\alpha\ar@{=}[d] & S\\
\Omega[T']\star \Delta[k-1] \ar[r] & \Omega[T']\ar[u]^{\ol{\alpha}}
}$$
The left vertical isomorphism asserts that there is a tree $T'$ of degree $n-k$ such that $T$ is obtained from $T'$ by adding a vertex below the root and grafting the result on top of a linear order (of degree $k-1$). When $k=n+1$, this means that $\Omega[T]=\Delta[n]$ is itself a linear order and when $k=0$ there is no condition. The bottom horizontal map is the degeneracy map obtained by removing all vertices below the root edge of $T'$.

If $\alpha$ has a trunk of height $n+1$, then $\alpha\colon \Delta[n]\rt S$ is a degenerate $n$-simplex in $S$. Such a simplex indeed has a unique (fully degenerate) lift since the fibers of $X\rt S$ over each colour are trivial. We proceed by \emph{decreasing} induction on the height of the trunk of $\alpha$. 

Suppose that $\alpha$ has a trunk of height $k$ and suppose that $\beta, \beta'\colon \Omega[T]\rt X$ are two lifts of $\alpha$ to $X$. Since $p\colon X\rt S$ induces an isomorphism between $(n-1)$-skeleta, the boundaries of $\beta$ and $\beta'$ are the same. 

Pick any leaf vertex $v$ of the tree $T$. We will construct a fiberwise homotopy between the elements $\beta$ and $\beta'$, which is constant on all the faces of $T$ except the face opposite $v$. To this end, consider the pushout square
$$\xymatrix{
\Lambda^v[T]\coprod \Lambda^v[T]\ar[r]^-{(\dau_0, \dau_1)} \ar[d] & \Lambda^v[T]\star \Delta[0]\ar[d] \\
\Omega[T]\coprod \Omega[T] \ar[r] & \Lambda^v[T\star \eta]
}$$
where $T\star \eta$ is the obtained from the tree $T$ by adding an extra root vertex $r$. The map $\dau_0$ (resp.~ $\dau_1$) is the face map induced by contracting the edge `$0$' above the extra root vertex (resp.~ by removing the extra root vertex and the root edge `$1$'). Denoting by $\sigma_r$ the degeneracy associated to the root vertex of $T\star \eta$, there are two maps
$$\xymatrix{
\Omega[T]\coprod \Omega[T]\ar[r]^-{(\beta, \beta')} & X & \Lambda^v[T]\star \Delta[0]\ar[r]^-{\sigma_r} & \Lambda^v[T]\ar[r]^-{\Lambda^v(\beta)} & X
}$$
which agree on $\Lambda^v[T]\coprod \Lambda^v[T]$. The associated map out of the pushout fits onto a commuting diagram
$$\xymatrix{
\Lambda^v[T\star \eta]\ar[d]\ar[rr] & & X\ar[d]\\
\Omega[T\star \eta]\ar[r]\ar@{..>}[rru]^H & \Omega[T]\ar[r]_\alpha & S.
}$$
Since $p$ is a left fibration, there exists a lift $H$ as indicated. Using the terminology from \cite{moe09}, $H$ gives a fiberwise homotopy along the 0-edge between $\beta$ and $\beta'$.

The restriction of $H$ to the remaining face $\dau^v\Omega[T]\star \Delta[0]$ gives a homotopy (rel boundary) from the face $\dau^v\beta$ to the face $\dau^v\beta'$. This restriction fits into a commutative diagram
$$\xymatrix{
\dau^v\Omega[T]\star \Delta[0]\ar[rr]^{\dau^v H}\ar@{=}[d] & & X\ar[d]\\
\dau^v\Omega[T']\star \Delta[k-1]\star \Delta[0] \ar[r] & \dau^v\Omega[T']\ar[r]_-{\ol{\alpha}} & S.
}$$
The left equality uses that the element $\alpha$ had a trunk of height $k$. In other words, the element $\dau^v H$ provides a lift of the degenerate element 
$$\xymatrix{
\dau^v\Omega[T']\star\Delta[k]=\dau^v\Omega[T']\star \Delta[k-1]\star \Delta[0]\ar[r] & \dau^v\Omega[T']\ar[r]^-{\ol{\alpha}} & S
}$$
which is of degree $n$ and has a trunk of height $k+1$. But by the inductive assumption, elements with a trunk of height $k+1$ have unique lifts. It follows that $H$ is degenerate, which means that the homotopy $H$ is also constant on the remaining face $\dau^v\Omega[T]$.

The map $H\colon \Omega[T\star \eta]\rt X$ thus provides a fiberwise homotopy (along the 0-edge) between $\beta$ and $\beta'$ which is contant on the boundary. This is not quite a homotopy in the sense of Section \ref{sec:cylinders}, but one can deduce the existence of such a homotopy either from Theorem B.2 in \cite{cis09}, or use the following argument. Observe that $H$ and the constant homotopy
$$\xymatrix{
\Omega[T\star \eta] \ar[r]^-{\sigma_r} & \Omega[T] \ar[r]^-{\beta'} & Y 
}$$
have the same boundary, except for the face obtained by contracting the edge `$0$' above the root vertex (on which the value of $H$ was $\beta$, rather than $\beta'$). Both of these homotopies therefore provide a diagonal lift for the same diagram
$$\xymatrix{
\Lambda^0[T\star \eta]\ar[r]\ar[d] & Y\ar[d]\\
\Omega[T\star \eta] \ar[r] & X
}$$
where $\Lambda^0[T\star \eta]$ excludes the face that contracting the `$0$'-edge. But lifts along inner horn inclusions are unique up to fiberwise $J$-homotopy, relative to the horn. In particular, we have that $\dau_0H= \beta$ and $\dau_0(\sigma_r\beta')=\beta'$ are fiberwise $J$-homotopic relative to their boundary.

Knowing that $\beta$ and $\beta'$ are $J$-homotopic relative to their boundary, we can now use that $p\colon X\rt S$ is a minimal fibration. It follows that $\beta=\phi^*\beta'$ for some automorphism $\phi$ of the tree $T$. Using $p$ to project to $S$, we find that $\phi$ induces an automorphism of the element $\alpha$. But $S$ was assumed to be normal, so $\phi$ is the identity and $\alpha$ indeed has a unique lift.
\end{proof}

\appendix

\section{Other examples}
\setcounter{theorem}{0}

Recently Cisinski \cite{cis14} has shown that the theory of minimal fibrations of simplicial sets can be generalised to model categories of presheaves over certain `Eilenberg-Zilber type' Reedy categories (\cite{cis14}, 2.1), in which the cofibrations are the monomorphisms. Such Reedy categories share the combinatorial properties of the simplex category $\Delta$ that provide presheaves over them with a well-behaved skeletal filtration, the crucial tool used in the construction of minimal Kan fibrations.

The model structure on dendroidal sets does not entirely fit into this framework for the simple reason that the category $\Omega$ of trees is not a strict Reedy category. Our proof of Theorem \ref{thm:existenceofminimalmodels} demonstrates how to take care of the automorphisms in $\Omega$ during the construction of minimal fibrations. Our treatment of automorphisms in Section \ref{sec:proof} extends to the following kind of `Eilenberg-Zilber type' \emph{generalised} Reedy categories, of which $\Omega$ is an example by Lemma \ref{lem:degeneraciesdeterminedbysections}.
\begin{definition}
A generalised Reedy category $\cat{R}$ is called an Eilenberg-Zilber category if it satisfies the following two conditions:
\begin{enumerate}
 \item $\cat{R}^-$ is the subcategory of split epimorphisms.
 \item two maps $r\rt s$ in $\cat{R}^-$ are the same if they have the same set of sections.
\end{enumerate}
\end{definition}
\begin{example}
Apart from the category of trees $\Omega$, the class of Eilenberg-Zilber categories includes well-known examples like the simplex category $\Delta$, Segal's category $\Gamma$ \cite{seg74}, Connes' cyclic category $\Lambda$ \cite{con83}, the category of nonempty finite sets and all group(oid)s.

Furthermore, the product of two Eilenberg-Zilber categories is also one and for any presheaf $X$ on an Eilenberg-Zilber category $\cat{R}$, the category of elements $\cat{R}/X$ is again one.
\end{example}
Indeed, all the definitions in Section \ref{sec:preliminariesondendroidalsets} make sense when $\Omega$ is replaced by an arbitrary Eilenberg-Zilber category, as defined above. In particular, any normal presheaf $X$ on an Eilenberg-Zilber category $\cat{R}$ admits a skeletal filtration, in which each inclusion $X^{(n)}\rt X^{(n+1)}$ is a pushout of boundary inclusions $\dau\cat{R}[r]\rt \cat{R}[r]$ for objects $r\in \cat{R}$ of degree $n+1$ (see Ch.~ 8 of \cite{cis06}). 
Lemma \ref{lem:degeneraciesdeterminedbyboundary} (which appears Lemma 2.6 in \cite{cis14} for strict Eilenberg-Zilber categories) shows that two degenerate elements of a normal presheaf X are the same once their boundaries agree. Instead of the specific cylinder defined in Section \ref{sec:cylinders}, one can  use the cylinder induced by the so-called Lawvere interval (\cite{cis06}, 1.3.9). For any object $X\in \widehat{\cat{R}}$, this cylinder is given by the product $X\times \Omega$ with the subobject classifier in $\widehat{\cat{R}}$.

The arguments in Section \ref{sec:proof} now show that Theorem \ref{thm:existenceofminimalmodels} (a) holds whenever the model structure on dendroidal sets is replaced by 
\setlength{\leftmargini}{1em}
\begin{itemize}
\item[] \emph{any model structure on a category of presheaves over an Eilenberg-Zilber category, in which the cofibrations are the normal monomorphisms.}
\end{itemize}
Apart from the model structure on dendroidal sets, there are many common model categories which are of this form. Examples include the model structure on the category $\widehat{\Lambda}$ of cyclic sets from \cite{dwy85} and the model structure on the category $\widehat{\text{Fin}}$ of symmetric simplicial sets from \cite{ros03} (see also the erratum \cite{ros08}). One can produce many more examples by taking the category of simplicial presheaves over an Eilenberg-Zilber category and equipping it with the generalised Reedy model structure \cite{ber08} or any Bousfield localization thereof. This includes the model structures on $\Gamma$-spaces and (dendroidal) Segal spaces as important examples.

Part (b) of Theorem \ref{thm:existenceofminimalmodels} does not hold in general: it crucially relies on the fact that the category $\Omega/X$ is a \emph{strict} Reedy category when $X$ is normal, in which case it is just a special case of Proposition 2.8 of \cite{cis14}. This property is not shared, for example, by the category $\Gamma$ (or $\Gamma\times\Delta$).

Finally, we would like to illustrate the use of the theory of minimal fibrations by means of the following application:
\begin{example}
Let us consider the following model for the homotopy theory of connective spectra parametrized by a simplicial set $S$. The category $\big(\sSet/S\big)^{\Gamma^{\text{op}}}$ of $\Gamma$-objects in $\sSet/S$ carries a model structure in which 
\begin{itemize}
 \item the cofibrations are the normal monomorphisms
 \item an object $X\colon \Gamma^{\text{op}}\rt \sSet/S$ is a fibrant object if it is Reedy fibrant (with respect to the Kan-Quillen model structure on $\sSet/S$) and the Segal maps
$$\xymatrix{
X(n)\ar[r] & X(1)\times_S X(1)\times_S \cdots \times_S X(1)
}$$
are trivial fibrations for all $n\geq 0$, as is the shear map $X(2)\rt X(1)\times_S X(1)$. 
\end{itemize}
One can easily adapt the classical proof in simplicial sets (see e.~g.~ \cite{gab67}) to prove that any minimal object in this model structure is a locally trivial bundle of $\Gamma$-spaces, i.~e.~ a $\Gamma$-space over $S$ whose pullback to a simplex $\Delta[n]$ is of the form $\Delta[n]\times F$, for some $\Gamma$-space $F$.
\end{example}
Each map of simplicial sets $S\rt S'$ induces a Quillen pair
$$\xymatrix{
\big(\sSet/S\big)^{\Gamma^{\text{op}}} \ar@<1ex>[r] & \big(\sSet/S'\big)^{\Gamma^{\text{op}}}\ar@<1ex>[l]
}$$
where the right adjoint pulls back a $\Gamma$-space over $S'$ to a $\Gamma$-space over $S$. Associating to each simplicial set $S$ the above model category therefore provides a (relative) functor
\begin{equation}\label{eq:gammaspaces}\vcenter{\entrymodifiers={+!!<0pt,\fontdimen22\textfont2>}\xymatrix{
\sSet^{\text{op}}\ar[r] & \cat{ModelCat}^{\text{R}} & S\ar@{|->}[r] & \big(\sSet/S\big)^{\Gamma^{\text{op}}}
}}\end{equation}
A $\Gamma$-space over $S$ is fibrant precisely when its pullback to each simplex of $S$ is fibrant. We can therefore apply the proof of Proposition \ref{prop:gluingleftfibrations} to obtain the following variant of Proposition \ref{prop:descentforleftfibrations} for parametrized $\Gamma$-spaces:
\begin{corollary}
The relative functor \eqref{eq:gammaspaces} preserves homotopy pullbacks.
\end{corollary}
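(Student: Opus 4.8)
The plan is to transcribe the proof of Proposition \ref{prop:gluingleftfibrations} and the Corollary following it, reading ``fibrant object of $(\sSet/S_i)^{\Gamma^{\mm{op}}}$'' in place of ``left fibration over $X_i$''. First I would unwind the statement. Write $\mc{A}_i=(\sSet/S_i)^{\Gamma^{\mm{op}}}$ and $S=S_1\cup_{S_0}S_2$. That \eqref{eq:gammaspaces} preserves the homotopy pullback of a span of cofibrations $S_1\hookleftarrow S_0\hookrightarrow S_2$ means precisely that the canonical Quillen adjunction
\[
\colim\colon \mc{A}_1\times^h_{\mc{A}_0}\mc{A}_2 \;\rightleftarrows\; (\sSet/S)^{\Gamma^{\mm{op}}}\;\colon\;\mm{pullback}
\]
is a Quillen equivalence. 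As in the Corollary to Proposition \ref{prop:gluingleftfibrations}, the derived unit of this adjunction is formally a weak equivalence, so all the content lies in the derived counit; equivalently, it suffices to prove the gluing statement: given fibrant $Y_i\in\mc{A}_i$ together with maps $Y_0\rt Y_i\times_{S_i}S_0$ that are weak equivalences over $S_0$, there is a fibrant object of $(\sSet/S)^{\Gamma^{\mm{op}}}$ whose pullback to each $S_i$ is weakly equivalent to $Y_i$.

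To prove the gluing statement I would run the argument of Proposition \ref{prop:gluingleftfibrations} essentially verbatim. First replace the $Y_i$ by weakly equivalent cofibrant (that is, normal) objects. Next reduce to the case of minimal objects, as in Diagram \eqref{diag:finaldiagramofminimalleftfibrations}: replace $Y_0$ by a minimal fibration $M_0\hookrightarrow Y_0$ with cofibrant domain, then $Y_1$ and $Y_2$ by minimal fibrations $M_1\rt S_1$, $M_2\rt S_2$ equipped with fiberwise retractions $r_i\colon Y_i\rt M_i$. This is legitimate because $\mc{A}_i$ is equivalent to the category of presheaves on the Eilenberg--Zilber category $(\Delta/S_i)\times\Gamma$ (products and categories of elements of Eilenberg--Zilber categories being Eilenberg--Zilber again), carrying a model structure whose cofibrations are the normal monomorphisms, so that the extension of Theorem \ref{thm:existenceofminimalmodels}(a) explained in the Appendix applies (using, say, the cylinder given by the Lawvere interval). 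The comparison maps $M_0\rt M_i\times_{S_i}S_0$ are then weak equivalences over $S_0$ between two minimal objects, hence isomorphisms; here one invokes the analogue of Corollary \ref{cor:pullbacksofminimalfibrations}, which holds because the functor $(\Delta/S_0)\times\Gamma\rt(\Delta/S_i)\times\Gamma$ induced by the cofibration $S_0\hookrightarrow S_i$ is fully faithful and bijective on automorphism groups of objects, so that restriction along it carries skeletal fibrations to skeletal fibrations. Consequently the two squares in the resulting diagram are genuine pullback squares.

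It then remains to glue. Form the pushout $p\colon M_1\cup_{M_0}M_2\rt S$. Since both squares are cartesian, the pullback of $p$ along $S_i\hookrightarrow S$ recovers $M_i\rt S_i$ up to canonical isomorphism, exactly as in Proposition \ref{prop:gluingleftfibrations}. To see that $p$ is a \emph{fibrant} object of $(\sSet/S)^{\Gamma^{\mm{op}}}$ I would use the locality of fibrancy recorded just before the statement: a $\Gamma$-space over a simplicial set is fibrant iff its pullback to each simplex of the base is fibrant. Every simplex of $S=S_1\cup_{S_0}S_2$ factors through $S_1$ or through $S_2$ (the pushout being formed degreewise), and the pullback of $p$ to such a simplex is the pullback of $M_1$ or of $M_2$ to that simplex, hence fibrant; therefore $p$ is fibrant. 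Its pullback to $S_i$ is $M_i\simeq Y_i$, which proves the gluing statement, and hence the derived counit above is a weak equivalence. Finally, as in the Remark following the Corollary to Proposition \ref{prop:gluingleftfibrations}, the hypothesis that both legs of the span are cofibrations can be relaxed to just one, using left properness of the Kan--Quillen model structure on $\sSet/S$, the Quillen equivalence of the $\Gamma$-space model categories over weakly equivalent simplicial sets, and the homotopy-pullback invariance results of \cite{bar12}.

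The step I expect to be the real obstacle is the reduction to minimal objects --- concretely, the claim that pulling a minimal object back along a cofibration yields a minimal object again. Over the category $\Gamma$, which is Segal's category with $\mm{Aut}\langle n\rangle=\Sigma_n$, one cannot circumvent this by reducing to a situation with trivial automorphism groups (as one can for plain simplicial sets), so one genuinely needs the skeletal-fibration machinery of Section \ref{sec:proof} together with its Eilenberg--Zilber generalization from the Appendix. Everything else --- the locality of fibrant objects, the descent behaviour of pushouts along monomorphisms in a presheaf category, and the homotopy-limit bookkeeping for diagrams of model categories --- is either quoted above or a routine transcription of the corresponding steps in the proof of Proposition \ref{prop:gluingleftfibrations}.
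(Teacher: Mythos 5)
Your proposal is correct and follows essentially the same route as the paper, which proves this corollary by rerunning the proof of Proposition \ref{prop:gluingleftfibrations} verbatim, using the Eilenberg--Zilber generalization of Theorem \ref{thm:existenceofminimalmodels}(a) from the Appendix and the locality of fibrancy for parametrized $\Gamma$-spaces. Your write-up in fact supplies more detail than the paper does, e.g.\ the identification of $(\sSet/S)^{\Gamma^{\mm{op}}}$ with presheaves on an Eilenberg--Zilber category and the verification that restriction along $S_0\hookrightarrow S_i$ preserves minimality via the analogue of Corollary \ref{cor:pullbacksofminimalfibrations}.
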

For the same reason, this corollary remains true if one uses the covariant model structure on $\sSet/S$ rather than the Kan-Quillen model structure.

\bibliographystyle{abbrv}
\bibliography{bibliography_minimal_models}


\end{document}